\newcommand{\beq}{\begin{eqnarray}}
\newcommand{\eeq}{\end{eqnarray}}
\newcommand{\beqn}{\begin{eqnarray*}}
\newcommand{\eeqn}{\end{eqnarray*}}
\newcommand{\rar}{\rightarrow}
\numberwithin{equation}{section}
\newtheorem{theorem}{\bf Theorem}[section]
\newtheorem{proposition}[theorem]{\bf Proposition}
\newtheorem{corollary}[theorem]{\bf Corollary}
\theoremstyle{remark}
\newtheorem{definition}[theorem]{\bf Definition}
\newtheorem{example}[theorem]{\bf Example}
\newtheorem{remark}[theorem]{\bf Remark}
\newcommand*{\Ge}{\geqslant}
\newcommand*{\inp}[2]{\langle{#1},\,{#2} \rangle}
\newcommand*{\Le}{\leqslant}
\begin{document}

\title[Helson matrices induced by measures]
{Helson matrices induced by measures}

   \author[S. Chavan]{Sameer Chavan}
   \address{Department of Mathematics and Statistics\\
Indian Institute of Technology Kanpur, India}
   \email{chavan@iitk.ac.in}
   \author[C. K. Sahu]{Chaman Kumar Sahu}
   \address{Department of Mathematics, Indian Institute of Technology Bombay, Powai,
   	Mumbai, 400076, India}
\email{chamanks@math.iitb.ac.in, sahuchaman9@gmail.com}
 \author[K. B. Sinha]{Kalyan
B. Sinha
}
   \address{J. N. Centre for Advanced Scientific Research, Jakkur, Bangalore 560064, India  \\ and
 Indian Statistical Institute, India
  }
\email{kbs@jncasr.ac.in}

\subjclass[2010]{Primary 47A55, 47B35; Secondary 44A10, 47A40.}
\keywords{Helson matrix, Laplace transform, scattering, absolutely continuous spectrum}

\begin{abstract} 
We discuss the boundedness, Schatten-class properties and scattering theory of Helson matrices. We also discuss a class of Helson matrices induced by positive and signed measures.
All the results of this paper are illustrated with several examples not considered earlier.
\end{abstract}

\maketitle

\section{Introduction}

This paper is motivated by the investigations in \cite{BPSSV, MP, PP-1, PP} surrounding the notion of the multiplicative Hankel matrix, commonly known as the {\it Helson matrix}. The $(m, n)$-th entry of a Helson matrix depends only on the product $mn.$ The purpose of this work is to discuss basic theory of Helson matrices induced by the Laplace transform of a class of positive measures. It is worth noting that our methods also allow us to consider Helson matrices induced by signed measures.

The sets of positive integers and non-negative integers are denoted by $\mathbb N$ and $\mathbb Z_+$ respectively, the set $\{j \in \mathbb N : j \Ge 2\}$ by $\mathbb N_2.$ 
Let $\mathbb R$ stand for the set of real numbers. The function $\ln: (0, \infty) \rar \mathbb R$ denotes the natural logarithm. 
Let $\mu$ be a regular positive Borel measure on the positive real line $(0, \infty),$ and the {\it Laplace transform} $\widehat{\mu}$ of $\mu$ be defined by 
\beqn
\widehat{\mu}(s) = \int_{0}^\infty e^{-s t} \mu(dt), \quad s \in (0, \infty),
\eeqn
whenever the integral converges. 
If $\eta$ is Lebesgue measurable and $\mu$ denotes the weighted Lebesgue measure with weight $\eta,$ then we set $\widehat{\eta}:=\widehat{\mu}.$
We reserve the notation $\lambda$ for the Lebesgue measure on $(0, \infty).$
The Hilbert space of square-summable complex-valued sequences $\{a_j\}_{j = 2}^\infty$ is denoted by $\ell^2(\mathbb N_2).$ Let $\{e_n\}_{n= 2}^\infty$ denote the standard orthonormal basis of $\ell^2(\mathbb N_2).$

Recall from \cite{BPSSV} that the {\it multiplicative Hilbert matrix} is given by 
\beq
 \label{mult-hilb}
\Big(\frac{1}{\sqrt{mn}\ln (mn)}\Big)_{m,n=2}^\infty.
\eeq
Motivated by the study of \eqref{mult-hilb} and related examples (see \cite{PP-1}), we establish norm bounds and study the spectral properties of Helson matrices.
The reader is referred to \cite{BPSSV, MP, PP-1, PP} for the spectral theory of Helson matrices.

This paper is organized as follows. In Section~\ref{S2}, the proof of the boundedness (from above and below) of Helson matrices (Theorem~\ref{l^2_suff_con}) is given, using the Schur-Holmgren-Carleman estimate.  In section~\ref{S1'}, we provide several families of examples illustrating the results of Section 2 as well as those of Proposition~C of the Appendix (Examples~\ref{convex-exp}-\ref{unbb-ex}). 
 In Section~\ref{S4}, criteria are given for the densities of measures to lead to compact and trace class Helson matrices (Theorem~\ref{weight-leb-bdd-comp}). The Section~\ref{S4} further deals with the use of Scattering theory (of the trace-class perturbations) to make some spectral conclusions about a class of Helson matrices (Theorem~\ref{scattering-weight-leb}). Finally, in the Appendix, some characterizations of trace-class and of other Schatten-class properties are discussed (Proposition~A). We conclude this paper by presenting the proofs of the unboundedness (or compactness) of operators given by Helson matrices induced by measures, whose statements appeared earlier in \cite{PP-1}.
 
Though a few of the results presented here (e.g., Proposition~\ref{estimate_norm} and the upper estimate of Theorem~\ref{l^2_suff_con}) were known and stated earlier (\cite{PP-1, PP}), the lower estimate given in Theorem~\ref{l^2_suff_con} is new. While \cite{BPSSV, PP-1, PP} discuss special cases of Lebesgue measure or a special case of Theorem~\ref{l^2_suff_con}(i) in which $\mu$ is a finite measure, the present work treats a more general setting. Furthermore, the methods in \cite{PP-1, PP} differ from those employed here, and the use of a class of signed measures introduces a layer of flexibility from the restrictions of positivity, along with scope of their applications.


\section{Operator norm bounds of Helson matrices} \label{S2}

The main result of this section provides sufficient conditions leading to the lower and upper bounds for the norm of the Helson matrix.

\begin{theorem}\label{l^2_suff_con}
 Let $\{\alpha(n)\}_{n=2}^\infty$ be a sequence of complex numbers. 
Then, \begin{itemize}
	\item[$\mathrm{(i)}$] if, for some $D>0,$
	\beq \label{upper-est}
	|\alpha(n)| \Le \frac{D}{\sqrt{n}\ln n}, \quad n \Ge 2,
	\eeq
	then 
	$\|(\alpha(mn))_{m,n=2}^\infty\| \Le D \pi,$
	\item[$\mathrm{(ii)}$] if alongwith $\mathrm{(i)},$ one also has that $\alpha(n) \in (0,\infty)$ for all $n \Ge 2$ and 
	\beq \label{lower-est}
	\alpha(n) \Ge \frac{C}{\sqrt{n}(b +\ln n)}, \quad n \Ge 2
	\eeq
	for some $b \Ge 0$ and $C>0,$ then $(\alpha(mn))_{m,n=2}^\infty$ is a bounded operator on $\ell^2(\mathbb N_2)$ with $C \pi \Le\|(\alpha(mn))_{m,n=2}^\infty\| \Le D\pi.$
\end{itemize} 
\end{theorem}

 The following recalls the Integral test needed to prove Theorem~\ref{l^2_suff_con}.

{\it Integral estimate \cite[Theorem~9.2.6]{Ba}: Let $f:[1,\infty) \rar [0, \infty]$ be a decreasing function. Then, for any $\alpha \in \mathbb N_2$ and $\beta \in \mathbb N_2 \cup \{\infty\}$ with $\alpha < \beta,$
\beq \label{int-test-est}
\int_{\alpha}^\beta f(t)dt \Le \sum_{k=\alpha}^\beta f(k) \Le \int_{\alpha - 1}^\beta f(t)dt.
\eeq
If $\int\limits_{\alpha-1}^\infty f(t) dt <  \infty,$ then  
the series  $\sum\limits_{k=\alpha}^\infty f(k)$ converges.
}

The next proposition gives the Schur-Holmgren-Carleman estimate, which is required to prove Theorem~\ref{l^2_suff_con}(i) (see \cite[Theorem~5.2]{HS} and \cite[Section~1.4.3]{Ka}). 
Since we could not find any proof of this classical result in this particular form, its proof is included for the sake of completeness.

\begin{proposition} \label{estimate_norm}
Let $(a_{m,n})_{m,n=2}^\infty$ be an infinite matrix of complex numbers such that for some sequence $\{t_n\}_{n = 2}^\infty$ of positive real numbers,
	\beq \label{M-and-tildeM}
	M: = \sup_{n \Ge 2} \frac{1}{t_n}\sum_{m=2}^\infty |a_{n, m}| t_m < \infty ~~\text{and} ~~ \widetilde{M}: =\sup_{m \Ge 2} \frac{1}{t_m}\sum_{n=2}^\infty |a_{n, m}| t_n < \infty.
	\eeq
Then,  $(a_{m,n})_{m,n=2}^\infty$ defines a bounded linear operator on $\ell^2(\mathbb N_2)$ and 
\beq \label{oper-est}
\|(a_{m,n})_{m,n=2}^\infty\| \Le (M\widetilde{M})^{\frac{1}{2}}
\eeq
for every choice of such sequence.
\end{proposition}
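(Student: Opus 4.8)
The plan is to estimate the sesquilinear form $(x,y)\mapsto \inp{Ax}{y}$, where $A=(a_{m,n})_{m,n=2}^\infty$ acts by $(Ax)_n=\sum_{m\Ge 2} a_{n,m}x_m$, first on the dense subspace of finitely supported sequences and then to pass to all of $\ell^2(\mathbb N_2)$ by continuity. Restricting to finitely supported $x=\{x_m\}$ and $y=\{y_n\}$ is convenient because every sum that appears is then finite, so all the rearrangements below are legitimate and no convergence question intervenes; the operator-norm bound is recovered afterwards by density, which also confirms that $A$ extends to a genuine bounded operator with $Ax\in\ell^2(\mathbb N_2)$. The starting point is the triangle inequality, which gives
\[
|\inp{Ax}{y}| \Le \sum_{n,m\Ge 2} |a_{n,m}|\,|x_m|\,|y_n|.
\]

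The decisive step is a weighted factorization of each summand, arranged so that the auxiliary weights $t_n/t_m$ and $t_m/t_n$ are mutually reciprocal and cancel. Writing
\[
|a_{n,m}|\,|x_m|\,|y_n| = \Big(|a_{n,m}|^{1/2}\,(t_n/t_m)^{1/2}\,|x_m|\Big)\Big(|a_{n,m}|^{1/2}\,(t_m/t_n)^{1/2}\,|y_n|\Big),
\]
I would apply the Cauchy--Schwarz inequality to the resulting double sum to obtain
\[
\sum_{n,m\Ge 2} |a_{n,m}|\,|x_m|\,|y_n| \Le \Big(\sum_{n,m\Ge 2}|a_{n,m}|\,\tfrac{t_n}{t_m}\,|x_m|^2\Big)^{1/2}\Big(\sum_{n,m\Ge 2}|a_{n,m}|\,\tfrac{t_m}{t_n}\,|y_n|^2\Big)^{1/2}.
\]

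Next I would group the first factor by the column index $m$ and the second by the row index $n$, and invoke the two hypotheses in \eqref{M-and-tildeM}. Since $\frac{1}{t_m}\sum_{n\Ge 2}|a_{n,m}|t_n\Le\widetilde{M}$ for each $m$, the first factor is at most $\widetilde{M}\sum_m |x_m|^2=\widetilde{M}\,\|x\|^2$; since $\frac{1}{t_n}\sum_{m\Ge 2}|a_{n,m}|t_m\Le M$ for each $n$, the second is at most $M\sum_n|y_n|^2=M\,\|y\|^2$. Combining the two yields $|\inp{Ax}{y}|\Le (M\widetilde{M})^{1/2}\|x\|\,\|y\|$, and taking the supremum over unit vectors $y$ gives $\|Ax\|\Le (M\widetilde{M})^{1/2}\|x\|$, whence \eqref{oper-est}. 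The argument uses only the two suprema attached to the chosen weight sequence $\{t_n\}$, so the estimate holds for every admissible choice, as asserted.

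The computation itself is elementary Cauchy--Schwarz once the reciprocal weighting is set up, so I expect the only genuinely analytic point to be the \emph{extension step}: confirming that the a priori bound on finitely supported vectors forces $Ax$ to lie in $\ell^2(\mathbb N_2)$ and that $A$ extends uniquely to a bounded operator with the same norm bound. The bookkeeping hazard to watch is matching $t_n/t_m$ to the hypothesis governing $\widetilde{M}$ and $t_m/t_n$ to the one governing $M$ (rather than the reverse), since an interchange there would leave the factors unbounded.
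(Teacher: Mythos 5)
Your argument is correct and is essentially the paper's proof: both are the classical Schur test, inserting the weights $t_m$ into a Cauchy--Schwarz estimate and then using the two suprema $M$ and $\widetilde{M}$ after interchanging the order of summation. The only cosmetic difference is that you bound the sesquilinear form $\inp{Ax}{y}$ via the symmetric factorization with $(t_n/t_m)^{1/2}$, whereas the paper estimates $\|Ax\|_2^2$ directly by applying Cauchy--Schwarz to each inner sum and then Fubini; your bookkeeping (matching $t_n/t_m$ to $\widetilde{M}$ and $t_m/t_n$ to $M$) and the density/extension step are both in order.
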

\begin{proof}
For $x = \{x_m\}_{m=2}^\infty \in \ell^2(\mathbb N_2),$ by the Cauchy-Schwarz inequality, an application of Fubini's theorem and using \eqref{M-and-tildeM}, we have that 
\beqn
\sum_{n=2}^\infty \Big|\sum_{m=2}^\infty a_{n,m} x_m\Big|^2 &\Le& \sum_{n=2}^\infty \Big(\sum_{m=2}^\infty |a_{n,m}| |x_m|\Big)^2\\ 
&\Le& \sum_{n=2}^\infty \Big(\sum_{m=2}^\infty |a_{n,m}|t_m\Big) \Big(\sum_{m=2}^\infty  \frac{|a_{n, m}| |x_m|^2}{t_m}\Big)\\
 &\Le& M \sum_{n=2}^\infty t_n \Big(\sum_{m=2}^\infty \frac{|a_{n, m}| |x_m|^2}{t_m}\Big)\\
 &=& M \sum_{m=2}^\infty |x_m|^2 \Big(\frac{1}{t_m}\sum_{n=2}^\infty |a_{n, m}| t_n \Big)\\
 &\Le& M \widetilde{M} \|x\|_2^2.
\eeqn
This proves the inequality \eqref{oper-est}. 
\end{proof}
\begin{remark}\label{estimate_norm1}
	In case, $(|a_{m,n}|)_{m,n=2}^\infty$ is a symmetric matrix, it is clear that $M = \widetilde{M}.$ Therefore, the operator norm estimate in Proposition~\ref{estimate_norm} yields $\|(a_{m,n})_{m,n=2}^\infty\| \Le M.$  
\end{remark}

We shall need the following standard result that is derived in \cite[Section~9.5]{Du}, by contour integration in complex analysis.  
\beq \label{integral-fact}
K_\epsilon: = \int_{0}^\infty \frac{t^{-(\frac{1+\epsilon}{2})}}{1+t}dt = \frac{\pi}{\sin(\frac{\pi(1-\epsilon)}{2})}, \quad 0 < \epsilon <1.
\eeq

\begin{proof}[Proof of Theorem~\ref{l^2_suff_con}]

(i) Let $\{t_m\}_{m=2}^\infty$ be given by  
	\beqn
	t_m = \frac{1}{\sqrt{m\ln m}}, \quad m \Ge 2.
	\eeqn
	We may apply Proposition~\ref{estimate_norm} to $H_\alpha: = (\alpha(mn))_{m,n=2}^\infty$ 
	to conclude that $H_\alpha$ is bounded on $\ell^2(\mathbb N_2)$ with $\|H_\alpha\| \Le D \pi.$ Indeed, 
	by the integral test (see \eqref{int-test-est}),
	\beqn 
	 \sqrt{n\ln n}\sum_{m=2}^\infty \frac{|\alpha(mn)|}{\sqrt{m\ln m}}  \notag
	 &\overset{\eqref{upper-est}}\Le& D\sum_{m=2}^\infty \frac{\sqrt{\ln n}}{m\sqrt{\ln m} \ln(mn)}\\\notag 
	 &\Le& D \int_{1}^\infty \frac{\sqrt{\ln n}}{x\sqrt{\ln x} \ln(nx)}dx ~~(\text{letting $y= \ln x$})\\\notag 
	 &=& D\int_{0}^\infty \frac{\sqrt{\ln n}}{ \sqrt{y} (y+\ln n)}dy~~(\text{letting $z= \sqrt{y}$})\\
	 &=&  D\int_{0}^\infty \frac{2\sqrt{\ln n}}{z^2+\ln n}dz =D\pi.
	\eeqn
	Thus, we obtain
	\beqn
	\sup_{n \Ge 2} \sqrt{n\ln n}\sum_{m=2}^\infty \frac{|\alpha(mn)|}{\sqrt{m\ln m}}  \Le D\pi.
	\eeqn
	Since $H_\alpha$ is symmetric, by Remark~\ref{estimate_norm1} and above estimate, it follows that $H_\alpha$ is bounded operator on $\ell^2(\mathbb N_2)$ with $\|H_\alpha\| \Le D\pi.$

	(ii) By part $\mathrm{(i)}$ of this theorem, $H_\alpha$ is bounded on $\ell^2(\mathbb N_2)$ with $\|H_\alpha\| \Le D\pi.$ 
	To see the lower estimate, for $\epsilon > 0,$ let $a_\epsilon = \{a_\epsilon(n)\}_{n=2}^\infty$ be a sequence defined by
	\beqn 
	a_\epsilon(n) = n^{-\frac{1}{2}} (\ln n)^{-\big(\frac{1+\epsilon}{2}\big)}, \quad n \Ge 2.
	\eeqn
	Note that $\|a_\epsilon\|_2^2 = \sum_{m=2}^\infty \frac{1}{m (\ln m)^{1+\epsilon}} < \infty,$ and furthermore by \eqref{int-test-est}, 
	\beqn
	\frac{1}{\epsilon (\ln 2)^\epsilon} = \int_2^\infty \frac{dx}{x (\ln x)^{1+\epsilon}} \Le \|a_\epsilon\|_2^2.
	\eeqn
	Thus, the lower estimate implies that
	\beq \label{lim_at_0-n}
	\lim_{\epsilon \rar 0^+} \|a_\epsilon\|_2 = +\infty.
	\eeq
Furthermore, by \eqref{int-test-est} and letting $t = \frac{b+\ln x}{\ln n},$
	\allowdisplaybreaks
	\beqn
	(H_{\alpha} (a_\epsilon))(n) 
	&=& \sum_{m=2}^\infty \frac{\alpha(mn)(\ln m)^{-(\frac{1+\epsilon}{2})}}{\sqrt{m}}
	\overset{\eqref{lower-est}}\Ge C \sum_{m=2}^\infty \frac{(\ln m)^{-(\frac{1+\epsilon}{2})}}{m\sqrt{n}(b+ \ln(mn))}\\
	&\Ge& \frac{C}{\sqrt{n}} \int_{2}^\infty \frac{(\ln x)^{-(\frac{1+\epsilon}{2})}}{x(b+\ln(nx))}\,dx
	\Ge \frac{C}{\sqrt{n}} \int_{2}^\infty \frac{(b+\ln x)^{-(\frac{1+\epsilon}{2})}}{x(b+\ln(nx))}\,dx\\
	&=& C \frac{(\ln n)^{-(\frac{1+\epsilon}{2})}}{\sqrt{n}} \int_{\frac{b+\ln 2}{\ln n}}^\infty \frac{t^{-(\frac{1+\epsilon}{2})}}{t+1}\,dt,
	\eeqn
	which together with \eqref{integral-fact} leads to  
	\beqn
	(H_{\alpha} (a_\epsilon))(n)  \Ge C \frac{(\ln n)^{-(\frac{1+\epsilon}{2})}}{\sqrt{n}} \Big(K_\epsilon - \int_{0}^{\frac{b+\ln 2}{\ln n}} \frac{t^{-(\frac{1+\epsilon}{2})}}{1+t}dt\Big).
	\eeqn
	Therefore, 
	\beqn
	\sum_{n=2}^\infty \left|(H_{\alpha} (a_\epsilon))(n) \right|^2 \Ge C^2 \sum_{n=2}^\infty \frac{1}{n(\ln n)^{1+\epsilon}} \Big(K_\epsilon - 
	\frac{2}{1-\epsilon} \Big(\frac{b+\ln 2}{\ln n}\Big)^{\frac{1-\epsilon}{2}}\Big)^2,
	\eeqn
	where we used the fact that 
	\beqn
	\int_{0}^{\frac{b+\ln 2}{\ln n}}\frac{t^{-(\frac{1+\epsilon}{2})}}{1+t}dt  &\Le& \int_{0}^{\frac{b+\ln 2}{\ln n}} t^{-(\frac{1+\epsilon}{2})} dt\\
	&=& \frac{2}{1-\epsilon} \Big(\frac{b+\ln 2}{\ln n}\Big)^{\frac{1-\epsilon}{2}}~~\text{with}~\epsilon \in (0,1).
	\eeqn
Thus, we have that
	\beqn
	&&\sum_{n=2}^\infty \left|(H_{\alpha} (a_\epsilon))(n)\right|^2 \\
	&\Ge& 
	C^2 \sum_{n=2}^\infty \frac{1}{n(\ln n)^{1+\epsilon}} \Big(K_\epsilon^2 +  
	\frac{4}{(1-\epsilon)^2} \Big(\frac{b+\ln 2}{\ln n}\Big)^{1-\epsilon} 
	-\frac{4K_\epsilon}{(1-\epsilon)} \Big(\frac{b+\ln 2}{\ln n}\Big)^{\frac{1-\epsilon}{2}}\Big)\\
	&\Ge& C^2\Big(K_\epsilon^2 \|a_\epsilon\|_2^2 + \frac{4(b+\ln 2)^{1-\epsilon}M_1}{(1-\epsilon)^2} - 
	\frac{4K_\epsilon (b+\ln 2)^{1-\epsilon}M_2}{(1-\epsilon)}\Big),
	\eeqn
	where $M_1 = \sum_{n=2}^\infty \frac{1}{n (\ln n)^2} < \infty$ and $M_2 = \sum_{n=2}^\infty \frac{1}{n (\ln n)^{3/2}} < \infty.$
	Therefore, for $\epsilon \in (0,1),$
	\beqn
	\frac{\|H_\alpha a_\epsilon\|_2^2}{\|a_\epsilon\|_2^2}
	\Ge C^2\Big(K_\epsilon^2 + \frac{4(b+\ln 2)^{1-\epsilon}M_1}{(1-\epsilon)^2\|a_\epsilon\|_2^2}  - 
	\frac{4K_\epsilon (b+\ln 2)^{1-\epsilon}M_2}{(1-\epsilon)\|a_\epsilon\|_2^2} \Big).  
	\eeqn
	Finally, by \eqref{integral-fact} and \eqref{lim_at_0-n}, it follows that
	\beqn
	\|H_{\alpha}\|^2	
	\Ge 
	C^2 \limsup_{\epsilon \rar 0^{+}} K_\epsilon^2
	\overset{\eqref{integral-fact}}= (C\pi)^2,
	\eeqn
	completing the proof.  
\end{proof}

Motivated by the multiplicative Hilbert matrix given in \eqref{mult-hilb} and related examples (see \cite{PP-1}), we introduce below the family of Helson matrices induced by the Laplace transform of positive measures. Before we define it, recall that for a Borel $\sigma$-algebra $\mathcal B((0, \infty)),$ let $\mu: \mathcal B((0, \infty)) \rar \mathbb R$ be a signed regular Borel measure. By the Hahn decomposition theorem (see \cite[Section~17.2]{RF}), 
\beq \label{measure-decom}
\mu(\Delta) = \mu_+(\Delta) - \mu_-(\Delta), \quad \Delta \in \mathcal B((0, \infty)),
\eeq
where $\mu_{+}$ and $\mu_{-}$ are the positive and negative parts of $\mu$ having disjoint support. 
The {\it total variation measure} $|\mu|(\cdot)$ of a measure $\mu$ is given by 
\beqn
|\mu|(\Delta) = \mu_+(\Delta) + \mu_-(\Delta), \quad \Delta \in \mathcal B((0, \infty)).
\eeqn   
\begin{definition}\label{H-mu}
	Let $\mu$ be a signed regular Borel measure on $(0, \infty)$ such that
	$\int_{0}^\infty 2^{-t} |\mu|(dt) < \infty.$
	The {\it Helson matrix $H_\mu$ induced by the measure $\mu,$} with respect to the standard basis $\{e_n\}_{n=2}^\infty$ in $\ell^2(\mathbb N_2),$ is given by 
	\beqn 
	H_\mu = \Big(\frac{\widehat{\mu} (\ln(mn))}{\sqrt{mn}}\Big)_{m,n=2}^\infty. 
	\eeqn
\end{definition}

\begin{remark}
(i):	It is worthwhile noting that by the virtue of condition on the measure $\mu$ and by an application of Lebesgue dominated convergence theorem, it follows easily that $\widehat{\mu}(\ln n) \rar 0$ as $n \rar \infty.$

(ii): It turns out that $H_\mu$ is a ``superposition'' of the Helson matrices $H_{\delta_c}$ of the Dirac delta measures $\delta_c,$ $c > 0.$ 
This simply means that for a positive measure $\mu$ on $(0, \infty)$, if $H_{\mu}$ defines a bounded operator on $\ell^2(\mathbb N_2),$ 
then
	\beqn
	\inp{{H}_\mu x}{y} = \int_0^\infty  \inp{{H}_{\delta_t} x}{y}  \mu(dt), \quad x, y \in \ell^2(\mathbb N_2).
	\eeqn
Indeed, note that for $x=\{x_m\}_{m=2}^\infty$ and $y=\{y_m\}_{m=2}^\infty$ in $\ell^2(\mathbb N_2),$
\beqn
\sum_{m, n=2}^{\infty}  \frac{\widehat{\mu}(\ln(mn))}{\sqrt{mn}}|x_m| |y_n| =  \inp{{H}_\mu |x|}{|y|} \Le \|H_\mu\| \|x\| \|y\| < \infty,
\eeqn
where $|z|=\{|z_m|\}_{m=2}^\infty$ for $z=\{z_m\}_{m=2}^\infty.$ It now follows from Fubini's theorem that
\beqn
\inp{{H}_\mu x}{y} &=&  \int_0^{\infty} \Big(\sum_{m=2}^{\infty} x_m m^{-t-\frac{1}{2}} \Big) \Big(\sum_{n=2}^{\infty}n^{-t-\frac{1}{2}}\, \overline{y_n}\Big)\mu(dt), \\
\eeqn
which is same as $\int\limits_0^\infty  \inp{{H}_{\delta_t} x}{y}  \mu(dt).$ 
\end{remark}

As a consequence of Theorem~\ref{l^2_suff_con}, the next result provides growth conditions in terms of the
Laplace transform $\hat{\mu}$ leading to the lower and upper bounds for the norm of $H_\mu$.


\begin{corollary} \label{trace-prop-gen}
	Let $\mu: \mathcal B((0, \infty)) \rar \mathbb R$ be a signed measure such that $\int_0^\infty 2^{-t}|\mu|(dt) < \infty$. Suppose that there exists $D>0$	such that
	\beq
	\label{meas-bdd-cond}
	|\widehat{\mu}(\ln n)| \Le \frac{D}{\ln n}, \quad n \Ge 2.
	\eeq
	Then, $H_\mu$ is a bounded self-adjoint operator on $\ell^2(\mathbb N_2)$. Furthermore, if $\mu$ is positive on $(0, \infty)$, and there exist $b, C >0$ such that   
	\beqn
	\widehat{\mu}(\ln n) \Ge \frac{C}{b + \ln n}, \quad n \Ge 2,
	\eeqn
	then $H_\mu$ is bounded with $\|H_\mu\| \in [C\pi, D\pi]$.
\end{corollary}
\begin{proof}
	Observe that for the matrix $H_\mu,$ by \eqref{meas-bdd-cond}, the condition \eqref{upper-est} holds. Thus, by Theorem~\ref{l^2_suff_con}(i), $H_\mu$ is bounded self-adjoint operator. The remaining part is also immediate from Theorem~\ref{l^2_suff_con}(ii).
\end{proof}

The next result gives sufficient conditions ensuring that both $H_{|\mu|}$ and $H_\mu$ are (possibly unbounded) self-adjoint operators.

\begin{proposition}\label{H-mu-decom}
	Let $\mu$ be a signed measure on $(0, \infty)$ as in Definition~\ref{H-mu} and let $\mu_{\pm}$ be associated positive measures in \eqref{measure-decom}. Denote by $\mathcal N_\pm \equiv \mathcal N_{\mu_\pm}$ the closed, densely defined operators respectively as in Lemma~B. Then
	\begin{itemize}
		\item [(i)] $H_{\mu_\pm} = \mathcal N^*_{\pm}\mathcal N_{\pm}$ are positive self-adjoint operators $($possibly unbounded$)$ in 
		$\ell^2(\mathbb N_2)$,
		\item [(ii)] if $\mathcal D(\mathcal N_-) \subseteq \mathcal D(\mathcal N_{+})$ and if there exists $0 \Le a < 1$ and $b > 0$ such that
	    \beq \label{suff-inequality}
		\|\mathcal N_+ f\|^2 \Le a \|\mathcal N_- f\|^2 + b \|f\|^2, \quad f \in \mathcal D(\mathcal N_-),
		\eeq
		then one can associate two unique self-adjoint operators $H_{|\mu|}$ and $H_\mu$ such that
	$$
		H_{\mu_+} + H_{\mu_-} = H_{|\mu|} ~\text{and}~H_{\mu_+} - H_{\mu_-} =H_\mu,
		$$
		in the sense of forms on $\mathcal D(H_{|\mu|})$ and $\mathcal D(H_\mu) \subseteq \mathcal D(\mathcal N_{-})$, respectively,
 \item [(iii)] if $\mathcal D(\mathcal N_{+}) \subseteq \mathcal D(\mathcal N_{-})$ and if there exists $0 \Le a < 1$ and $b > 0$ such that
 \beqn
\|\mathcal N_- f\|^2 \Le a \|\mathcal N_+ f\|^2 + b \|f\|^2, \quad f \in \mathcal D(\mathcal N_{+}),
\eeqn
then one can associate two unique self-adjoint operators $H_{|\mu|}$ and $H_\mu$ such that
$
H_{\mu_+} + H_{\mu_-} = H_{|\mu|} ~\text{and}~H_{\mu_+} - H_{\mu_-} =H_\mu,
$
in the sense of forms on $\mathcal D(H_{|\mu|})$ and $\mathcal D(H_\mu) \subseteq \mathcal D(\mathcal N_{+})$, respectively,
		\item [(iv)] if one of $\mathcal N_\pm$ is bounded  on $\ell^2(\mathbb N_2)$, then $H_\mu = H_{\mu_+} - H_{\mu_-}$ and $H_{|\mu|} = H_{\mu_+} + H_{\mu_-}$, as sum of operators on the smaller domain $\mathcal D(H_{\mu_\mp}) \subseteq \mathcal D(\mathcal N_{\mp}),$ respectively.
	\end{itemize}
\end{proposition}
\begin{proof}
The part (i) is a direct consequence of Lemma~B in the Appendix. For (ii), let $\|\mathcal N_+ f\|^2 \Le a_+ \|\mathcal N_- f\|^2 + b_+ \|f\|^2$ for all $f \in \mathcal D(\mathcal N_-) \subseteq \mathcal D(\mathcal N_+)$ with $0 \Le a_+ < 1$ and $b_+ > 0$. This means that the form-domain of $H_{\mu_-}$ is contained in the form-domain of $H_{\mu_+}$ and that on the smaller domain $\mathcal D(\mathcal N_-),$ the form of $H_{\mu_+}$ is bounded relative to that of $H_{\mu_-}$ with relative form-bound being less than $1$ (\cite[pp.~319]{Ka}, \cite[pp.~167--169]{RS-2}). Therefore, by \cite[Theorem~X.17]{RS-2}, there exists a unique pair of self-adjoint operators, denoted by $H_\mu = - \widetilde{H}_{\mu}$ and $H_{|\mu|}$ respectively such that 
	\beqn
	\inp{f}{\widetilde{H}_{{\mu}} f} = \inp{f}{H_{\mu_-}f} -\inp{f}{H_{\mu_+} f}, \quad
f \in \mathcal D(H_\mu) \subseteq \mathcal D(\mathcal N_-), \\
\inp{g}{H_{|\mu|} g} = \inp{g}{H_{\mu_+} g} +\inp{g}{H_{\mu_-}g}, \quad g \in \mathcal D(H_{|\mu|}) \subseteq \mathcal D(\mathcal N_-),
	\eeqn
	proving (ii).
The proof of part (iii) follows the same argument as in (ii), and hence we omit the details.
 	
(iv) If say, $\mathcal N_+$ is bounded on $\ell^2(\mathbb N_2)$, then the inequality \eqref{suff-inequality} is satisfied with $a_+ = 0$, and hence $H_\mu = \mathcal N_+^*\mathcal N_+ - H_{\mu_-} = \mathcal N_+^*\mathcal N_+ - \mathcal N_-^*\mathcal N_-$, not only in the sense of form-sum, but also as a sum of operators on $\mathcal D(H_{\mu_-}) = \mathcal D(\mathcal N_-^*\mathcal N_-)$.
\end{proof}

As far as we know, the following consequence of Theorem~\ref{l^2_suff_con} has not been recorded in the existing literature.  
\begin{corollary}\label{convex-measure}
	Let $\mu_j~ (j=1, 2, \ldots, n)$ be positive measures on $(0, \infty)$ as in Definition~\ref{H-mu}. Let $\nu$ be a convex combination of $\mu_1, \mu_2, \ldots \mu_n,$ i.e., 
	\beq \label{convex-comb-measure}
	\nu(\Delta) = \sum_{j=1}^n t_j \mu_j(\Delta) ~ \text{for all Borel sets $\Delta \subseteq (0, \infty)$}
	\eeq 
	with $t_j \in [0,1]$ for all $j=1, \ldots, n$ and $\sum_{j=1}^n t_j = 1.$
	If, for $j=1, \ldots, n,$ there exist $b_j \Ge 0$ and constants $C_j, D_j > 0$ such that
	\beq \label{upp-low-bound-2}
		\frac{C_j}{b_j +\ln n} \Le \widehat{\mu}_j(\ln n) \Le \frac{D_j}{\ln n}, \quad n \Ge 2,
	\eeq
	then $H_{\nu}$ is bounded on $\ell^2(\mathbb N_2)$ with $C\pi \Le \|H_{\nu}\| \Le D\pi$, where $C= \min\{C_j : j =1, \ldots, n\}$ and $D = \max\{D_j: j=1, \ldots, n\}.$  
\end{corollary}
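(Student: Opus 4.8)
The plan is to reduce the entire statement to a single application of Theorem~\ref{l^2_suff_con}(iv), the only real input being the \emph{linearity} of the Laplace transform. First I would check that $\nu_r$ is an admissible measure in the sense of Definition~\ref{H-mu}: since $\nu_r = r\mu_1 + (1-r)\mu_2$ and each $\mu_j$ satisfies $\int_0^\infty 2^{-t}\mu_j(dt) < \infty$, the quantity $\int_0^\infty 2^{-t}\nu_r(dt) = r\int_0^\infty 2^{-t}\mu_1(dt) + (1-r)\int_0^\infty 2^{-t}\mu_2(dt)$ is finite, so $H_{\nu_r}$ is well defined. The same linearity yields, straight from the definition of the Laplace transform, the pointwise identity $\widehat{\nu_r}(\ln n) = r\,\widehat{\mu}_1(\ln n) + (1-r)\,\widehat{\mu}_2(\ln n)$ for every integer $n \geq 2$.

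With this identity in hand, the required bounds on $\widehat{\nu_r}(\ln n)$ follow from \eqref{upp-low-bound-2} together with elementary convexity inequalities. For the upper estimate, I would sum the two upper bounds with weights $r$ and $1-r$ to get $\widehat{\nu_r}(\ln n) \leq \frac{rD_1 + (1-r)D_2}{\ln n} \leq \frac{D}{\ln n}$, the last step using that a convex combination of $D_1, D_2$ never exceeds $D = \max\{D_1, D_2\}$. This verifies hypothesis \eqref{upper-est} of Theorem~\ref{l^2_suff_con}(ii) with constant $D$, uniformly in $r$.

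The lower estimate requires the one mild preparatory step of unifying the two offsets $b_1, b_2$. Setting $b = \max\{b_1, b_2\}$, I would use $b_j + \ln n \leq b + \ln n$ to replace each lower bound $\frac{C_j}{b_j + \ln n}$ by the smaller quantity $\frac{C_j}{b+\ln n}$, and then take the convex combination to obtain $\widehat{\nu_r}(\ln n) \geq \frac{rC_1 + (1-r)C_2}{b+\ln n} \geq \frac{C}{b+\ln n}$, where $C = \min\{C_1, C_2\}$ bounds the convex combination from below. This is precisely hypothesis \eqref{lower-est} of Theorem~\ref{l^2_suff_con}(iv) with the same constant $C$ and offset $b$, again uniformly in $r \in [0,1]$.

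Finally, I would invoke Theorem~\ref{l^2_suff_con}(iv) for the measure $\nu_r$ with the constants $C, D$ and offset $b$ just produced, which at once gives that $H_{\nu_r}$ is a positive bounded operator with $C\pi \leq \|H_{\nu_r}\| \leq D\pi$ for every $r \in [0,1]$. I do not expect any genuine obstacle: the only point demanding care is the lower bound, where the two measures may carry different offsets $b_j$ and one must merge them into a single $b$ before applying the convexity inequality. The uniformity in $r$ is then automatic, since the final constants $C, D, b$ are independent of $r$.
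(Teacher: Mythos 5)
Your proof is correct and follows essentially the same route as the paper: linearity of the Laplace transform, merging the offsets via $b=\max\{b_1,b_2\}$, bounding the convex combination by $C=\min\{C_1,C_2\}$ and $D=\max\{D_1,D_2\}$, and then invoking Theorem~\ref{l^2_suff_con}(iv). The paper's proof is just a terser version of the same argument.
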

\begin{proof}
	Note that $\int_0^\infty 2^{-t}\nu(dt) < \infty$ and $\widehat{\nu}(t) = \sum_{j=1}^n t_j\widehat{\mu}_j(t)$ for all $t \Ge \ln 2.$ If we set $b=\max\{b_1, \ldots, b_n\},$ then by \eqref{upp-low-bound-2},
	\beqn
	 \frac{C}{b + \ln n} \Le \widehat{\nu}(\ln n)
	 \Le \frac{D}{\ln n},~~ n \Ge 2.
	\eeqn
This together with Corollary~\ref{trace-prop-gen} completes the proof.
\end{proof}

\section{Examples}\label{S1'}
In this section, we provide several examples illustrating the results established in this paper.
An application of Corollary~\ref{convex-measure} provides  the precise norm of Helson matrices induced by a convex combination of exponential measures.
\begin{example}\label{convex-exp}
Let $\mu_j(dt) = e^{-a_jt}dt,$ where $a_j \Ge 0 ~(j=1,2),$ and let $\nu_r$ be given by \eqref{convex-comb-measure}. Then, 
\beqn
\frac{1}{\max\{a_1, a_2\} + \ln n} \Le  \widehat{\nu}_r(\ln n) \Le \frac{1}{\ln n}, \quad n \Ge 2,
\eeqn
and hence, by Corollary~\ref{convex-measure}, $H_{\nu_r}$ is bounded on $\ell^2(\mathbb N_2)$ with $\|H_{\nu_r}\| = \pi,$ independent of any value of $r \in [0,1].$ 
As special sub-cases,
\begin{itemize}
\item[$\mathrm{(i)}$] if $\mu_a(dt) = e^{-at}dt$ with $a \Ge 0,$ then $H_a \equiv H_{\mu_a}$ is bounded with $\|H_{a}\| = \pi$ for all $a >0.$ However, it should be noted that while $H_{a}$ here acts on $\ell^2(\mathbb N_2),$ a similar matrix 
		\beqn
		\widetilde{H}_{a} = \Big(\frac{1}{\sqrt{mn}(a+\ln(mn))}\Big)_{m,n=1}^\infty
		\eeqn
		has been considered in \cite{PP-1}. There, it has been shown that $\widetilde{H}_{a}$ is bounded on $\ell^2(\mathbb N)$ with operator norm, can be greater than $\pi$ unlike here $\|H_a\| = \pi$ for all $a >0.$ To understand this apparent contradiction, let $\ell^2(\mathbb N_2)$ be embedded as a subspace of $\ell^2(\mathbb N),$ consider $x = \{x_n\}_{n=1}^\infty$ be such that $x_1 \neq 0$ and $x_n = 0,~n \Ge 2.$ Then, 
		\beqn
		\inp{\widetilde{H}_{a}x}{x} = \frac{|x_1|^2}{a} = \frac{\|x\|_2^2}{a},
		\eeqn
		which implies that $\|\widetilde{H}_{a}\| \Ge \frac{1}{a}.$ Therefore, if $a \in (0, \pi^{-1}),$ then $\|\widetilde{H}_{a}\| > \pi,$ giving that the operator norm is beyond $\pi$ for certain values of $a.$ However, for the matrix $H_{a}$ (induced by $e^{-at}dt$) in $\ell^2(\mathbb N_2),$ the absolutely continuous spectrum is $[0,\pi]$ and $\|H_a\| = \pi,$ independent of any $a > 0.$
	\item[$\mathrm{(ii)}$] if we consider $\mu(dt) = e^{-at}\cosh(\omega t)dt$ with $\omega \in \mathbb R$ satisfying $a \Ge |\omega|,$ then $\mu$ may be rewritten as  $(e^{-(a-\omega)t} + e^{-(a+\omega)t})dt/2,$ and hence $\|H_\mu\| = \pi.$  
	\hfill $\diamondsuit$ 
\end{itemize} 
\end{example} 

\begin{example} \label{Seip-int-exam}
(i) 	For $\alpha \Ge 0,$ let $\eta_\alpha(t) = \frac{1}{(1+t)^\alpha},~ t >  0$ and let $\mu_\alpha(dt) = \eta_\alpha(t)dt.$ 
		Then, $e^{-\alpha t} \Le \eta_\alpha(t) \Le 1$ on $(0, \infty),$ which implies that
		\beqn 
		\frac{1}{\alpha + \ln n} = \int_{0}^\infty n^{-t} e^{-\alpha t}dt \Le \widehat{\eta}_\alpha(\ln n) 
		\Le \int_{0}^\infty n^{-t} dt = \frac{1}{\ln n}, \quad n \Ge 2.
		\eeqn
		Hence, $\|H_{\mu_\alpha}\| = \pi$ for every $\alpha \Ge 0,$ since $C=D=1$ in this example.
	
	(ii) 	For $a, c, p \Ge 0,$ consider $\eta(t) = t^p + ce^{-at}, t > 0.$  Then,
		\beq \label{weight-t^p}
		\widehat{\eta}(\ln n)  = 
		\int_{0}^\infty  n^{-t}(t^p + ce^{-at}) dt
		= \frac{\Gamma(p+1)}{(\ln n)^{p+1}} + \frac{c}{a+\ln n}, \quad n \Ge 2,
		\eeq
		which implies that 	
		\beqn
		\frac{c}{a+\ln n} \Le \widehat{\eta}(\ln n) \Le \Big(\frac{\Gamma(1+p)}{(\ln 2)^{p}}+c\Big) \frac{1}{\ln n}, \quad n \Ge 2,
		\eeqn 
		where $\Gamma$ denotes the Gamma function.
		Hence, the corresponding $H_{\mu}$ is bounded on $\ell^2(\mathbb N_2)$ with 
		$
		c \pi \Le \|H_{\mu}\| \Le \big(\frac{\Gamma(1+p)}{(\ln 2)^{p}}+c\big)\pi.
		$
	\hfill $\diamondsuit$
\end{example}  

The following example is one for which the operator norm of $H_\mu$ is strictly bigger than $\pi$ unlike many examples discussed above. 

\begin{example}
	Let $\mu$ be a positive measure such that $H_\mu$ is bounded on $\ell^2(\mathbb N_2).$ An obvious lower bound of $H_\mu$ is given by
	\beqn
	\|H_\mu\| \Ge \inp{H_\mu e_2}{e_2} = \frac{\widehat{\mu}(\ln 4)}{2}.
	\eeqn 
	In particular, if $\eta_p(t) =t^p,~ t > 0$ for $p>0,$ then,
	by using \eqref{weight-t^p}, 
	\beq \label{lower-bound-t^p}
	\|{H}_{\mu_p}\| \Ge \frac{\Gamma(p+1)}{2(\ln 4)^{p+1}}.
	\eeq
	Let $h(p): = \frac{\Gamma(p+1)}{2 (\ln 4)^{p+1}}, ~p > 0.$
Since $\frac{h(p+1)}{h(p)} = \frac{p+1}{\ln 4}$ for all $p \Ge 1$, it follows that $h$ is an increasing function on $\mathbb N_2$. 
	Hence, there exists a positive integer $p_0$ such that 
	\beqn
	\frac{\Gamma(p+1)}{2 (\ln 4)^{p+1}} > \pi~ \text{for all integers}~p \Ge p_0.
	\eeqn
	To obtain the least integer $p_0$ satisfying this, observe that $h(5) \approx 8.44,$ which is bigger than $\pi.$ Also, since $h(4) \approx 2.43 < \pi,$ the least value of $p_0$ is exactly $5.$ This together with \eqref{lower-bound-t^p} yields
	\beqn
	\|{H}_{\mu_p}\| > \pi~\mbox{for all integers}~ p \Ge 5.
	\eeqn 
	Moreover, since $(\ln n)\widehat{\mu}_p(\ln n) \rar 0$ as $n \rar \infty,$ by Proposition~C(ii) of the Appendix, ${H}_{\mu_p}$ is compact on $\ell^2(\mathbb N_2),$ and as we shall see later in Example~\ref{exam-weight-leb}(i) that it is in fact a trace-class operator. 
	\hfill $\diamondsuit$
\end{example}

\begin{example}  
	For $p>0,$ let $\eta(t)= t^p\sin(t).$ Then, $\mu$ is a non-positive infinite measure and $\mu(dt) = \mu_+(dt) - \mu_{-}(dt),$ where $\mu_{\pm}$ are positive measures given by $\mu_{\pm}(dt) = t^p \max\{\pm \sin(t), 0\}(dt).$ 
Thus, $$\widehat{\mu}_{\pm}(x) \Le \int_0^\infty e^{-xt}t^pdt = \frac{\Gamma(p+1)}{x^{p+1}},$$
	so that the operators $H_{\mu_{\pm}}$, the matrix elements of which are given by $(\frac{\widehat{\mu}_{\pm}(\ln mn)}{\sqrt{mn}})_{m,n=2}^\infty$, satisfy the bound $\frac{\widehat{\mu}_\pm(\ln n)}{\sqrt{n}} \Le \frac{\Gamma(p+1)}{\sqrt{n}(\ln n)^{p+1}}, n \Ge 2$.
	Therefore, by Theorem~\ref{upper-est}(i), $H_{\mu_{\pm}}$ is bounded on $\ell^2(\mathbb N_2)$. Furthermore, since $H_{\mu_{\pm}}$ are both positive and since
	$$
	\sum_{m=2}^\infty \frac{\widehat{\mu}_\pm(2 \ln m)}{m} \Le \frac{\Gamma(p+1)}{2^{p+1}} \sum_{m=2}^\infty \frac{1}{m (\ln m)^{p+1}} < \infty,
	$$
	it follows by Proposition~A(i) of the Appendix that $H_{\mu_{\pm}}$ are trace-class. This together with  Proposition~\ref{H-mu-decom} implies that $H_\mu = H_{\mu_+} - H_{\mu_-}$ is also trace-class. 
	\hfill $\diamondsuit$
\end{example}

The next example yields a class of unbounded self-adjoint Helson matrices.

\begin{example}\label{unbb-ex}
	For $p \in (0,1),$ consider the weighted Lebesgue measure $\mu_p$ with weight function $\eta_p(t) = t^{-p},~t > 0.$ Then, $\widehat{\eta}_p(t) = \Gamma(1-p)t^{p-1} (t > 0),$ which gives that $(\ln n) \widehat{\eta}_p(\ln n) =\Gamma(1-p)(\ln n)^{p} \rar\infty$ as $n \rar \infty.$ Hence, by Proposition~C(i) of the Appendix, $H_{\mu_p}$ is an unbounded positive self-adjoint operator. \hfill $\diamondsuit$
\end{example}

\begin{example}
For $p \in (0,1)$ and $a > 0$, consider the weighted Lebesgue measure $\mu$ with weight function $\eta(t) = t^{-p}- e^{-at},~t > 0.$ 
	In this case, $\mu_+(dt) = t^{-p}dt$ and $\mu_-(dt) = e^{-at}dt.$  By Example~\ref{unbb-ex}, $H_{\mu_+}$ is unbounded in $\ell^2(\mathbb N_2).$ On the other hand, by Example~\ref{convex-exp}(i), $H_{\mu_-}$ is bounded in $\ell^2(\mathbb N_2)$. Thus, 
	$$\|\mathcal N_- f\|^2 = \inp{\mathcal N_-^* \mathcal N_- f}{f} = \inp{H_{\mu_-}f}{f} \Le \|H_{\mu_-}\| \|f\|^2, \quad f \in \ell^2(\mathbb N_2),$$ which implies that $\mathcal N_-$ is bounded. Therefore, by Proposition~\ref{H-mu-decom}(iv), $H_\mu = H_{\mu_+}  - H_{\mu_-}$ and $H_{|\mu|} = H_{\mu_+} + H_{\mu_-}$, as sum of operators on $\mathcal D(H_{\mu_+}).$
\end{example}

\section{Scattering between two Helson matrices}\label{S4}

For definitions of various spectra of a self-adjoint operator (e.g., essential spectrum $\sigma_{ess}(\cdot),$ absolutely continuous spectrum $\sigma_{ac}(\cdot))$, the reader is referred to \cite{RS-1, Si-2}.

The following result is devoted to studying the Helson matrices induced by weighted Lebesgue measures.

\begin{theorem}\label{weight-leb-bdd-comp}
Let $\mu$ be a weighted Lebesgue measure on $(0, \infty)$ with a continuous positive weight function $\eta$ satisfying $\int_{0}^\infty 2^{-y} \eta(y) dy < \infty$. Suppose furthermore that $\eta$ is monotone on $(0, \infty)$ with finite right limit $\eta(0_+)$. 
	\begin{itemize}
		\item[$\mathrm{(i)}$] If $\eta(t) \Ge \eta(0_+)$ for all $t>0$ $($i.e., $\eta$ is non-decreasing$)$, then $x\,\widehat{\eta}(x) \Ge \eta(0_+)$ and $x\,\widehat{\eta}(x) \rar \eta(0_+)$ as $x \rar \infty$. On the other hand, if $\eta(t) \Le \eta(0_+)$ for all $t>0$ $($i.e., $\eta$ is non-increasing$)$, then $x\,\widehat{\eta}(x) \Le \eta(0_+)$ and $x\,\widehat{\eta}(x) \rar \eta(0_+)$ as $x \rar \infty$, and in both these cases,
	    $$\|H_\mu\| \Le \pi \big(\sup_{x \Ge \ln 2} x \widehat{\eta}(x)\big).$$ Moreover, $H_\mu - \eta(0_+) H_\lambda$ $($or $\eta(0_+) H_\lambda - H_\mu$ respectively$)$ is positive, self-adjoint, compact, and 
	    $\sigma_{ess}(H_\mu) =  [0, \eta(0_+)\pi]$.
		\item[$\mathrm{(ii)}$] if in either of the cases above, instead, for some $p >0,$ there exist $D \equiv D(p)>0$ and a positive measurable function $g$ on $(0, \infty)$ such that $\int_0^\infty y^pg(y)e^{-y}dy < \infty$ and such that
		\beq \label{eta-ineq-2}
		\Big|\eta\Big(\frac{y}{x}\Big) - \eta(0_+)\Big| \Le D \Big(\frac{y}{x}\Big)^pg(y), \quad y > 0,~x \Ge \ln 2,
		\eeq
		then $H_\mu -\eta(0_+)H_{\lambda}$ $($or $\eta(0_+)H_{\lambda}-H_\mu$ respectively$)$ is positive, self-adjoint and trace-class.
	\end{itemize}
\end{theorem}
\begin{proof}
	For any $x \Ge \ln 2,$ 
	\beqn
	x\,\widehat{\eta}(x) = x\int_0^\infty e^{-tx}\eta(t)dt = \int_0^\infty \eta\Big(\frac{y}{x}\Big) e^{-y}dy
	\eeqn
	leading to 
	\beq \label{lap-diff}
	x\, \widehat{\eta}(x) - \eta(0_+) = \int_0^\infty \Big(\eta\Big(\frac{y}{x}\Big) - \eta(0_+)\Big)e^{-y}dy, 
	\eeq
	where we used the fact that $\int_0^\infty e^{-y}dy = 1.$
	
	(i) Since $\eta$ is continuous on $(0, \infty)$ with finite right limit at $0$, $\eta\big(\frac{y}{x}\big)$ converges to $\eta(0_+)$ as $x \rar \infty$ for each $y > 0$. We first consider the case in which $\eta$ is non-decreasing. In this case,  for all $x \Ge \ln 2$ and $y >0$, we have $$\eta\big(\frac{y}{x}\big) \Le \eta\big(\frac{y}{\ln 2}\big)~~\text{and}~~\eta(0_+) \Le \eta\big(\frac{y}{\ln 2}\big).$$  This combined with \eqref{lap-diff} and the triangle inequality yields that  
	\beqn
	0 \Le x \widehat{\eta}(x) - \eta(0_+) \Le \int_0^\infty \Big|\eta\big(\frac{y}{x}\big) - \eta(0_+)\Big|e^{-y} dy &\Le& 2 \int_0^\infty \eta\big(\frac{y}{\ln 2}\big) e^{-y} dy\\ 
	&=&  \ln 4 \int_0^\infty 2^{-y} \eta(y) dy < \infty.
	\eeqn
	Hence, by an application of the dominated convergence theorem,  $x \widehat{\eta}(x) \rightarrow \eta(0_+)$ as $x \rar \infty$. 
	On the other hand, if $\eta$ is non-increasing, then  $$\Big|\eta\big(\frac{y}{x}\big) - \eta(0_+)\Big| \Le 2 \eta(0_+), \quad x \Ge \ln 2, ~y > 0,$$ and hence by \eqref{lap-diff},
	$$0  \Le \eta(0_+) - x \widehat{\eta}(x) \Le 2\eta(0_+) \int_0^\infty  e^{-y} dy = 2\eta(0_+), \quad x \Ge \ln 2.$$ Again as above, by the dominated convergence theorem, $x \widehat{\eta}(x)  \rar \eta(0_+)$ as $x \rar \infty$. Therefore, $\widehat{\eta}(x) = O(\frac{1}{x})$ as $x \rar \infty$, which combined with Theorem~\ref{l^2_suff_con}(i) yields that $\|H_\mu\| \Le \pi \big(\sup_{x \Ge \ln 2} x \widehat{\eta}(x)\big)$.
	
	
Since the Laplace transform of Lebesgue measure on $(0,\infty)$ is $1/x$, the above discussion yields that $x\big(\widehat{\eta}(x) - \eta(0_+)\widehat{\lambda}(x)\big)$ is non-negative and tends to $0$ as $x \rar \infty$ (or $x \big(\eta(0_+)\widehat{\lambda}(x) - \widehat{\eta}(x)\big)$ is non-negative and tends to $0$ as $x \rar \infty$, respectively). Thus, by Proposition~C(ii) in the Appendix, 
    $H_\mu- \eta(0_+)H_{\lambda}$ (or $\eta(0_+)H_{\lambda}-H_\mu$, respectively) is compact, positive, self-adjoint. Therefore, it follows from Weyl's theorem \cite[Theorem~IV.5.35]{Ka} and \cite[Theorem~1]{BPSSV}, 
	\beqn
	\sigma_{ess}(H_\mu) = \sigma_{ess}(\eta(0_+)H_{\lambda}) = 
	[0, \eta(0_+)\pi],
	\eeqn
    proving (i).
	
(ii) Note that by \eqref{lap-diff},
	\beqn
	|x \widehat{\eta}(x) - \eta(0_+)| 
	\overset{\eqref{eta-ineq-2}}\Le 
	D \int_0^\infty \Big(\frac{y}{x}\Big)^p g(y)e^{-y}dy
	&=& \widetilde{D}x^{-p}, \quad x \Ge \ln 2, 
	\eeqn
	where $\widetilde{D} = D\int_0^\infty y^p g(y)e^{-y}dy.$
	Since the
	Laplace transform of Lebesgue measure on $(0, \infty)$ is $\frac{1}{x}$, as in (i),  
	\beqn
	x\big(\widehat{\eta}(x) - \eta(0_+)\widehat{\lambda}(x)\big) \Le \widetilde{D}x^{-p},\quad p > 0, \quad x \Ge \ln 2,
	\eeqn
	$($or $x\big(\eta(0_+)\widehat{\lambda}(x)- \widehat{\eta}(x)\big) \Le  \widetilde{D}x^{-p}$ respectively$).$
	If $\eta(t) \Ge \eta(0_+)$ on $(0, \infty),$ then  $\widehat{\eta} -\eta(0_+)\widehat{\lambda}$ is a positive function, and 
	\beqn
	\big(\widehat{\eta} - \eta(0_+)\widehat{\lambda}\big)(\ln n) \Le \frac{\widetilde{D}}{(\ln n)^{p+1}}, \quad n \Ge 2,
	\eeqn
	which implies that
	\beqn
	\sum_{n=2}^\infty \frac{\big(\widehat{\eta} - \eta(0_+)\widehat{\lambda}\big)(2\ln n)}{n} \Le \widetilde{D} \sum_{n=2}^\infty \frac{1}{n (2\ln n)^{p+1}}  < \infty.
	\eeqn
	Thus, by Proposition~A(i) of the Appendix, $H_\mu-\eta(0_+)H_{\lambda}$ is positive and of trace-class
	(or $\eta(0_+)H_{\lambda}- H_\mu$ is positive and of trace-class respectively).
\end{proof}

\begin{remark}
If $\eta(0_+) = 0$ (in which case only the non-decreasing case survives), then $H_\mu$ itself is compact, positive, self-adjoint and obviously, $\sigma_{ess}(H_\mu) = \{0\}$.
\end{remark}

Here are some special instances of Theorem~\ref{weight-leb-bdd-comp}.
\begin{example}\label{exam-weight-leb}
	(i) For $\eta(t) = t^p, \eta(0_+) = 0$ and 
	\beqn
	\Big|\eta\Big(\frac{y}{x}\Big) - \eta(0_+)\Big| = \Big(\frac{y}{x}\Big)^p.
	\eeqn
	Applying Theorem~\ref{weight-leb-bdd-comp}  with $g = 1$ gives that $H_\mu$ is positive and of trace-class. 
	
	(ii) For $\eta(t) = \ln (1+t),$ $\eta(0_+) = 0$ and 
	\beqn
	\Big|\eta\Big(\frac{y}{x}\Big) - \eta(0_+)\Big| = \ln \Big(1+\frac{y}{x}\Big) \Le \frac{y}{x}, \quad y,~ x > 0.
	\eeqn
	Therefore, $H_\mu$ is positive and of trace-class.
	
	(iii) For $\eta(t) = e^{at^p}$ with $a>0$ and $p \in (0,1),$ we have that $\eta(0_+) =1$ and 
	\beqn
	\eta\Big(\frac{y}{x}\Big) - 1 = a\Big(\frac{y}{x}\Big)^p \sum_{k=1}^\infty \frac{\Big(a\Big(\frac{y}{x}\Big)^p\Big)^{k}}{(k+1)!} \Le a \Big(\frac{y}{x}\Big)^p e^{a\big(\frac{y}{\ln 2}\big)^p}, \quad x \Ge \ln 2,
	\eeqn
	and hence  
	\beqn
	\int_0^\infty y^p e^{a\big(\frac{y}{\ln 2}\big)^p}e^{-y}dy < \infty,
	\eeqn
	since $p<1,$ and thus by Theorem~\ref{weight-leb-bdd-comp}(ii), $H_\mu - H_{\lambda}$ is of trace-class.
	%
	
	(iv) For a non-negative continuous function $h$ on $(0, \infty),$ let $\mu$ be the positive measure given by $\mu(dt)= e^{-h(t)}dt.$  
	Assume that $h$ increases (or decreases respectively) to the right limit $h(0_+),$ and satisfies 
	\beqn
	h\Big(\frac{y}{x}\Big) - h(0_+) \Le D \Big(\frac{y}{x}\Big)^p~ \text{for $p>0$}
	\eeqn
	(or $h(0_+)-h\Big(\frac{y}{x}\Big) \Le D \Big(\frac{y}{x}\Big)^p$ respectively). Then, the corresponding $\eta$ decreases (or increases respectively), and
	\beqn
	0 < \eta(0_+) - \eta\Big(\frac{y}{x}\Big) \Le h\Big(\frac{y}{x}\Big) – h(0_+) \Le D\Big(\frac{y}{x}\Big)^{p}.
	\eeqn 
	Since $\int_0^\infty y^pe^{-y}dy < \infty,$ by Theorem~\ref{weight-leb-bdd-comp}(ii), it follows that $e^{-h(0_+)}H_{\lambda}-H_\mu$ (or $H_\mu-e^{-h(0_+)}H_{\lambda}$) is positive and of trace-class.
	As a special case, one may take $h(t) = at$ with $a > 0,$ then $H_{\lambda}-H_\mu$ is positive and of trace-class.
	
	(v) For $m \in \mathbb N,$ let $\eta_m(t) = (1+t)^m.$ Then, $\eta_m(0_+) =1$ and for $x \Ge \ln 2,$
	\beqn
	\eta_m\Big(\frac{y}{x}\Big)-1 = \sum_{k=1}^m {m\choose k} \Big(\frac{y}{x}\Big)^k &\Le& (\ln 2)^{1-m}\Big(\frac{y}{x}\Big) \sum_{k=1}^m {m\choose k}y^{k-1}.
	\eeqn
	If $g_m(y) = \sum_{k=1}^m {m\choose k}y^{k-1},$ then $\int_0^\infty yg_m(y)e^{-y}dy < \infty,$ and consequently,  $H_{\mu_m} - H_{\lambda}$ is positive and of trace-class. 
	\hfill	$\diamondsuit$
\end{example}

One of the aims here is to study some spectral properties of Helson matrices induced by a measure $\mu.$ The scattering theory offers one possible avenue for doing so. For that purpose, the following two propositions collect a few results from scattering theory (e.g., \cite{Ka}, \cite[Propositions~1-3]{RS}). 

\begin{proposition}\label{wave-chain-complete}
	Let $H_1$ and $H_2$ be two $($possibly unbounded$)$ self-adjoint operators on a Hilbert space $\mathscr H$ and let $P_{ac}(H_j)$ be the projections onto the absolutely continuous part of the Hilbert space $\mathscr H$ with respect to $H_j, j =1,2.$
	\begin{itemize}
		\item [(i)] Set the wave operators,
		\beqn
		\Omega_{\pm}(H_2, H_1) = \text{s-}\lim_{t \rar {\pm \infty}} e^{itH_{2}} e^{-itH_1}P_{ac}(H_1),
		\eeqn
		if they exist. Then the operators $\Omega_{\pm}(H_2, H_1)$ are partial isometries with initial space $P_{ac}(H_1)\mathscr H$ and final ranges $R(\Omega_{\pm}(H_2, H_1)) \subseteq P_{ac}(H_2)\mathscr H.$ Furthermore, $\Omega_{\pm}(H_2, H_1)$ maps the domain $\mathcal D(H_1)$ of $H_1$ onto the domain $\mathcal D(H_2)$ of $H_2$ and 
		\beqn
		H_2\Omega_{\pm}(H_2, H_1)  = \Omega_{\pm}(H_2, H_1) H_1 \quad \text{on $\mathcal D(H_1)$}.
		\eeqn
		\item [(ii)] $($Chain rule$)$ Let $H_j~ (j=1,2,3)$ be three $($possibly unbounded$)$ self-adjoint operators and assume furthermore that the wave operators  $\Omega_{\pm}(H_3, H_2)$ and $\Omega_{\pm}(H_2, H_1)$
	exist. Then, $\Omega_{\pm}(H_3, H_1)$ exist and 
	\beqn
	\Omega_{\pm}(H_3, H_1) = \Omega_{\pm}(H_3, H_2)\Omega_{\pm}(H_2, H_1).
	\eeqn	
	\item [(iii)] $($Completeness$)$ The wave operators $\Omega_{\pm}(H_2, H_1)$ are said to be complete if
	\beqn
	\text{Range}(\Omega_{+}(H_2, H_1)) = \text{Range}(\Omega_{-}(H_2, H_1)) = P_{ac}(H_2)\mathscr H,
	\eeqn
	and in such a case, $H_{2, ac}$ is unitarily equivalent to $H_{1, ac}.$
	These wave operators are complete if and only if $\Omega_{\pm}(H_1, H_2)$ exist and in such a case, $\Omega_{\pm}(H_1, H_2) = \Omega_{\pm}(H_2, H_1)^*.$
	\end{itemize}
\end{proposition}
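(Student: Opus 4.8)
The plan is to prove the three parts in sequence, using the group intertwining relation as the common engine. \emph{For} (i), I would first record that for every $s \in \mathbb{R}$,
\[
e^{isH_2}\,\Omega_\pm(H_2,H_1) = \Omega_\pm(H_2,H_1)\,e^{isH_1};
\]
this comes out of the definition by inserting $e^{isH_2}$, writing $e^{i(t+s)H_2}e^{-i(t+s)H_1}e^{isH_1}$, and reindexing the strong limit $t \mapsto t-s$ (which does not affect the limit as $t \to \pm\infty$), together with the fact that $P_{ac}(H_1)$ commutes with $e^{isH_1}$. The partial-isometry claim is then immediate: for $\phi \in P_{ac}(H_1)\mathscr H$ the unitarity of $e^{itH_2}$ and $e^{-itH_1}$ gives $\|e^{itH_2}e^{-itH_1}\phi\| = \|\phi\|$, so $\|\Omega_\pm\phi\| = \|\phi\|$, while the factor $P_{ac}(H_1)$ annihilates $(P_{ac}(H_1)\mathscr H)^\perp$. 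To see $\mathcal R(\Omega_\pm) \subseteq P_{ac}(H_2)\mathscr H$ I would promote the group relation to $f(H_2)\Omega_\pm = \Omega_\pm f(H_1)$ for bounded Borel $f$ via the functional calculus, whence for a unit vector $\phi \in P_{ac}(H_1)\mathscr H$ the spectral measures satisfy $\langle f(H_2)\Omega_\pm\phi,\Omega_\pm\phi\rangle = \langle f(H_1)\phi,\phi\rangle$; thus $\mu^{H_2}_{\Omega_\pm\phi} = \mu^{H_1}_\phi$ is absolutely continuous and $\Omega_\pm\phi \in P_{ac}(H_2)\mathscr H$. Finally, differentiating the group relation at $s = 0$ (Stone's theorem) yields $\Omega_\pm\mathcal D(H_1) \subseteq \mathcal D(H_2)$ and the generator intertwining $H_2\Omega_\pm = \Omega_\pm H_1$ on $\mathcal D(H_1)$.

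\emph{For} (ii), the chain rule, I would use $\mathcal R(\Omega_\pm(H_2,H_1)) \subseteq P_{ac}(H_2)\mathscr H$ from (i) so that $P_{ac}(H_2)$ acts as the identity on $\Omega_\pm(H_2,H_1)\phi$, reducing the claim to
\[
\Omega_\pm(H_3,H_2)\Omega_\pm(H_2,H_1)\phi = \text{s-}\lim_{t\to\pm\infty} e^{itH_3}e^{-itH_2}\,\Omega_\pm(H_2,H_1)\phi.
\]
The two limits are then fused into a single one by a unitarity estimate: since $e^{itH_3}e^{-itH_2}$ is unitary, $\|e^{itH_3}e^{-itH_2}(\Omega_\pm(H_2,H_1) - e^{itH_2}e^{-itH_1})\phi\| = \|(\Omega_\pm(H_2,H_1) - e^{itH_2}e^{-itH_1})\phi\| \to 0$, so the right-hand limit equals $\lim_t e^{itH_3}e^{-itH_1}\phi = \Omega_\pm(H_3,H_1)\phi$.

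\emph{For} (iii), I would first observe $\Omega_\pm(H,H) = P_{ac}(H)$ directly from the definition, so that whenever both $\Omega_\pm(H_2,H_1)$ and $\Omega_\pm(H_1,H_2)$ exist, the chain rule of (ii) gives $\Omega_\pm(H_2,H_1)\Omega_\pm(H_1,H_2) = P_{ac}(H_2)$ and $\Omega_\pm(H_1,H_2)\Omega_\pm(H_2,H_1) = P_{ac}(H_1)$. The first identity forces $P_{ac}(H_2)\mathscr H \subseteq \mathcal R(\Omega_\pm(H_2,H_1))$, which with the reverse inclusion from (i) gives completeness; the two identities together show $\Omega_\pm(H_2,H_1)$ restricts to a unitary from $P_{ac}(H_1)\mathscr H$ onto $P_{ac}(H_2)\mathscr H$ intertwining $H_1$ and $H_2$, hence $H_{1,ac} \cong H_{2,ac}$. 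For the converse, assuming completeness I would take $\psi = \Omega_\pm(H_2,H_1)\phi \in P_{ac}(H_2)\mathscr H$ and compute, with $r_t := \Omega_\pm(H_2,H_1)\phi - e^{itH_2}e^{-itH_1}\phi \to 0$,
\[
e^{itH_1}e^{-itH_2}\psi = \phi + e^{itH_1}e^{-itH_2}r_t \longrightarrow \phi = \Omega_\pm(H_2,H_1)^*\psi,
\]
which, since such $\psi$ exhaust $P_{ac}(H_2)\mathscr H$ and the defining $P_{ac}(H_2)$ kills the orthogonal complement, shows $\Omega_\pm(H_1,H_2)$ exists and equals $\Omega_\pm(H_2,H_1)^*$.

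I expect the only genuinely delicate point to be the range inclusion $\mathcal R(\Omega_\pm) \subseteq P_{ac}(H_2)\mathscr H$ in (i): it is the one step that leaves the purely algebraic and isometric bookkeeping and requires the spectral-measure identity $\mu^{H_2}_{\Omega_\pm\phi} = \mu^{H_1}_\phi$ obtained from the functional-calculus form of the intertwining relation. Once this is in place, parts (ii) and (iii) follow formally from the fusion of strong limits and the identity $\Omega_\pm(H,H) = P_{ac}(H)$.
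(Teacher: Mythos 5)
Your proposal is correct, and every step holds up: the reindexing $t\mapsto t-s$ that yields the group intertwining relation, the promotion of that relation to bounded Borel functions of $H_1,H_2$ so that $\mu^{H_2}_{\Omega_\pm\phi}=\mu^{H_1}_\phi$ and the range lands in $P_{ac}(H_2)\mathscr H$, the unitarity trick that fuses the two strong limits in the chain rule, and the identity $\Omega_\pm(H,H)=P_{ac}(H)$ combined with the chain rule to characterize completeness. You should be aware, though, that the paper offers no proof of this proposition at all: it is explicitly presented as a compilation of standard facts from scattering theory, with references to Amrein--Jauch--Sinha, Kato, and Reed--Simon. What you have written is essentially the canonical argument found in those sources, so there is no genuine methodological divergence to report --- you have simply supplied the proof the paper delegates to the literature, and you correctly identify the range inclusion in (i) as the one step that goes beyond algebraic bookkeeping. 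One small remark on the statement itself: your differentiation of the group relation at $s=0$ proves $\Omega_\pm\mathcal D(H_1)\subseteq\mathcal D(H_2)$, which is the correct general assertion; the word ``onto'' in part (i) is an overstatement, since the range of $\Omega_\pm(H_2,H_1)$ is contained in $P_{ac}(H_2)\mathscr H$ and surjectivity onto $\mathcal D(H_2)\cap P_{ac}(H_2)\mathscr H$ only follows once completeness is assumed (by running your same argument for $\Omega_\pm(H_1,H_2)=\Omega_\pm(H_2,H_1)^*$). Your proof proves the version that is actually true.
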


The next set of propositions gives the trace-class criterion for ensuring the existence and completeness of a pair of self-adjoint operators due to Kato-Birman-Rosenblum (see \cite{Bir}, \cite{Ka} and \cite[Theorem~XI.8]{RS}). 

\begin{proposition}\label{set-of-prop}
	 $\mathrm{(i)}$ Let $H_j~ (j=1,2)$ be two $($possibly unbounded$)$ self-adjoint operators such that $H_2 -H_1$ is of trace class. Then, $\Omega_{\pm}(H_2, H_1)$ exist and are complete. Furthermore, $H_{2, ac}$ is unitarily equivalent to $H_{1, ac},$ and $\sigma_{ac}(H_1) = \sigma_{ac}(H_2).$
	 
	 $\mathrm{(ii)}$ Let $H_j~ (j=1,2,3)$ be three $($possibly unbounded$)$ self-adjoint operators such that $H_3-H_2$ and $H_2-H_1$ both are of trace-class. Then, all the three wave operators $\Omega_{\pm}(H_3, H_2), \Omega_{\pm}(H_2, H_1)$ and $\Omega_{\pm}(H_3, H_1)$ exist and are complete, and furthermore,
	 \beqn
	 \Omega_{\pm}(H_3, H_1) = \Omega_{\pm}(H_3, H_2)\Omega_{\pm}(H_2, H_1),
	 \eeqn
	 or, equivalently, $\Omega_{\pm}(H_3, H_2) = \Omega_{\pm}(H_3, H_1) \Omega_{\pm}(H_2, H_1)^*.$ Also, $H_{j, ac}~ (j=1,2,3)$ are mutually unitarily equivalent.
\end{proposition}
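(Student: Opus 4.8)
The plan is to deduce both parts from a single deep input---the Kato--Rosenblum theorem, that a trace-class perturbation preserves the absolutely continuous spectrum and produces complete wave operators---together with the purely structural facts about partial isometries already recorded in Proposition~\ref{wave-chain-complete}. I would invoke the hard theorem by citation (\cite{Bir}, \cite{Ka}, \cite[Theorem~XI.8]{RS}) rather than reprove it; everything beyond it is bookkeeping with the intertwining, initial-space, and range properties of wave operators.

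For part $\mathrm{(i)}$, I would apply the Kato--Rosenblum theorem to the pair $(H_1, H_2)$: since $H_2 - H_1$ is of trace class, $\Omega_{\pm}(H_2, H_1)$ exist and are complete. By Proposition~\ref{wave-chain-complete}(i) their initial space is $P_{ac}(H_1)\mathscr H$, while completeness together with Proposition~\ref{wave-chain-complete}(iii) identifies their range with $P_{ac}(H_2)\mathscr H$; hence each $\Omega_{\pm}(H_2, H_1)$ restricts to a unitary from $P_{ac}(H_1)\mathscr H$ onto $P_{ac}(H_2)\mathscr H$ intertwining $H_1$ and $H_2$, which gives $H_{1,ac} \cong H_{2,ac}$ and therefore $\sigma_{ac}(H_1) = \sigma_{ac}(H_2)$. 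For part $\mathrm{(ii)}$, I would apply $\mathrm{(i)}$ to each of the three pairs: the perturbations $H_3 - H_2$ and $H_2 - H_1$ are trace class by hypothesis, and $H_3 - H_1 = (H_3 - H_2) + (H_2 - H_1)$ is trace class since the trace-class ideal is closed under addition. Thus $\Omega_{\pm}(H_3, H_2)$, $\Omega_{\pm}(H_2, H_1)$ and $\Omega_{\pm}(H_3, H_1)$ all exist and are complete, giving the mutual unitary equivalence of $H_{1,ac}, H_{2,ac}, H_{3,ac}$, and the factorization $\Omega_{\pm}(H_3, H_1) = \Omega_{\pm}(H_3, H_2)\,\Omega_{\pm}(H_2, H_1)$ is precisely the chain rule of Proposition~\ref{wave-chain-complete}(ii).

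For the equivalent form I would use that completeness permits inversion: by Proposition~\ref{wave-chain-complete}(iii) one has $\Omega_{\pm}(H_2, H_1)^* = \Omega_{\pm}(H_1, H_2)$. Multiplying the chain rule on the right by $\Omega_{\pm}(H_2, H_1)^*$ and noting that $\Omega_{\pm}(H_2, H_1)\,\Omega_{\pm}(H_2, H_1)^*$ is the projection onto the final range, which by completeness equals $P_{ac}(H_2)$, while the initial-space property gives $\Omega_{\pm}(H_3, H_2)\,P_{ac}(H_2) = \Omega_{\pm}(H_3, H_2)$, the right-hand side collapses to $\Omega_{\pm}(H_3, H_2) = \Omega_{\pm}(H_3, H_1)\,\Omega_{\pm}(H_2, H_1)^*$, as claimed. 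The main obstacle is not in the remaining argument at all---it is entirely absorbed into the cited Kato--Rosenblum theorem; the only point demanding care is the algebraic identity above, where one must line up the two projection identities (the range projection being $P_{ac}(H_2)$ by completeness, and the initial-space projection acting as the identity on $\Omega_{\pm}(H_3, H_2)$) so that the product telescopes correctly.
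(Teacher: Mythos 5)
Your proposal is correct and follows the same route as the paper: part $\mathrm{(i)}$ is the Kato--Rosenblum theorem invoked by citation, and part $\mathrm{(ii)}$ is obtained exactly as in the paper's remark, from part $\mathrm{(i)}$ (plus closure of the trace class under sums) together with the chain rule and the partial-isometry/completeness properties of Proposition~\ref{wave-chain-complete}. Your explicit verification of the rearranged identity $\Omega_{\pm}(H_3, H_2) = \Omega_{\pm}(H_3, H_1)\,\Omega_{\pm}(H_2, H_1)^*$ is a correct elaboration of a step the paper leaves implicit.
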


\begin{remark}
	Clearly, Proposition~\ref{set-of-prop}(ii) is a consequence of Proposition~\ref{wave-chain-complete} and Proposition~\ref{set-of-prop}(i). It is obvious that if $H_3-H_2$ and $H_2-H_1$ both are of trace class, then $H_3-H_1$ is also of trace-class and by Proposition~\ref{set-of-prop}(i), $\Omega_{\pm}(H_3, H_1)$ will exist and be complete. It's importance in our context lies in the facts that for Helson matrices, the condition for verifying the trace-class property would need the positivity of the difference operators (see Proposition~A(i) of the Appendix). However, in some situations, this issue can also be addressed directly (see Theorem~\ref{trace-prop-gen}).
\end{remark} 

The next theorem considers a class of Helson matrices where the above two propositions can be applied. 

\begin{theorem}\label{scattering-weight-leb}
$\mathrm{(i)}$ Let $\mu_j~ (j=1,2)$ be two Borel measures on $(0, \infty)$ with $\lambda$ denoting the Lebesgue measure. Furthermore, let $\mu_1$ and $\mu_2$ be absolutely continuous with respect to $\lambda$ and with densities $\eta_j$ satisfying the condition in Theorem~\ref{weight-leb-bdd-comp}$(ii),$ and $\eta_j(0_+) > 0$ for $j =1,2.$ If we set $H_j$ as the corresponding Helson matrix $H_{\mu_j}~ (j=1,2)$ and set $H_0 = H_{\lambda},$ then $H_1-\eta_1(0_+)H_0$  and $H_2-\eta_2(0_+)H_0$ are of trace-class.

$\mathrm{(ii)}$ Both the wave operators $\Omega_{\pm}(H_1, \eta_1(0_+)H_0)$ and $\Omega_{\pm}(H_2, \eta_2(0_+)H_0)$ exist and are complete. If furthermore $\eta_1(0_+) = \eta_2(0_+) = \gamma>0,$ then $\Omega_{\pm}(H_2, H_1)$ exist, are complete and 
\beqn
\Omega_{\pm}(H_2, H_1) = \Omega_{\pm}(H_2, \gamma H_0) \Omega_{\pm}(H_1, \gamma H_0)^*.
\eeqn
Moreover, $\sigma_{ac}(H_j) = [0, \gamma \pi],~ j =1,2.$
\end{theorem}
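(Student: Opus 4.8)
The plan is to assemble Theorem~\ref{scattering-weight-leb} almost entirely from results already established, so the proof reduces to bookkeeping with the scattering propositions. For part $\mathrm{(i)}$, I would simply invoke Theorem~\ref{weight-leb-bdd-comp}$(ii)$: by hypothesis each density $\eta_j$ satisfies the estimate \eqref{eta-ineq-2} (the monotone-approach-to-$\eta_j(0_+)$ condition), so that theorem directly delivers that $H_j - \eta_j(0_+)H_0$ is positive and of trace-class for $j=1,2$. Nothing further is needed here; the content of $\mathrm{(i)}$ is precisely the conclusion of the earlier trace-class theorem applied twice, once for each measure.

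For part $\mathrm{(ii)}$, the existence and completeness of $\Omega_{\pm}(H_j, \eta_j(0_+)H_0)$ follow from Proposition~\ref{set-of-prop}$(i)$ applied to the pair $(H_j, \eta_j(0_+)H_0)$, whose difference is trace-class by $\mathrm{(i)}$. Under the extra hypothesis $\eta_1(0_+)=\eta_2(0_+)=\gamma$, I would set up the chain $H_1,\ \gamma H_0,\ H_2$. Both differences $H_1 - \gamma H_0$ and $H_2 - \gamma H_0$ are trace-class, hence so is $H_2 - H_1 = (H_2 - \gamma H_0) - (H_1 - \gamma H_0)$, and Proposition~\ref{set-of-prop}$(i)$ again gives existence and completeness of $\Omega_{\pm}(H_2, H_1)$. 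The factorization formula is where I must be slightly careful: I would apply the chain rule of Proposition~\ref{wave-chain-complete}$(ii)$ to write $\Omega_{\pm}(H_2, H_1) = \Omega_{\pm}(H_2, \gamma H_0)\,\Omega_{\pm}(\gamma H_0, H_1)$, and then use the completeness clause of Proposition~\ref{wave-chain-complete}$(iii)$ (equivalently Proposition~\ref{set-of-prop}) to replace $\Omega_{\pm}(\gamma H_0, H_1)$ by the adjoint $\Omega_{\pm}(H_1, \gamma H_0)^*$, yielding exactly the stated identity.

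The spectral conclusion $\sigma_{ac}(H_j)=[0,\gamma\pi]$ comes from comparing with $\gamma H_0$. Since $\sigma_{ac}$ of the multiplicative Hilbert matrix $H_{\text{Leb}} = H_0$ is $[0,\pi]$ (cited as \cite[Theorem~1]{BPSSV} in the proof of Theorem~\ref{weight-leb-bdd-comp}), scaling gives $\sigma_{ac}(\gamma H_0)=[0,\gamma\pi]$; and because $H_{j,ac}$ is unitarily equivalent to $(\gamma H_0)_{ac}$ by Proposition~\ref{set-of-prop}$(i)$, the absolutely continuous spectra coincide, so $\sigma_{ac}(H_j)=[0,\gamma\pi]$ for $j=1,2$.

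The only genuine subtlety I anticipate — and hence the step I would write out most carefully — is the correct orientation of the wave operators in the factorization: the chain rule produces $\Omega_{\pm}(H_2,\gamma H_0)\Omega_{\pm}(\gamma H_0, H_1)$, and turning the second factor into $\Omega_{\pm}(H_1,\gamma H_0)^*$ requires the completeness equivalence $\Omega_{\pm}(H_1,H_2)=\Omega_{\pm}(H_2,H_1)^*$ from Proposition~\ref{wave-chain-complete}$(iii)$. Everything else is a direct citation of the trace-class perturbation machinery, so the proof should be short, with the main work being to verify that the hypotheses of Theorem~\ref{weight-leb-bdd-comp}$(ii)$ and of Proposition~\ref{set-of-prop} are in place before chaining them together.
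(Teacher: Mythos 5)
Your proposal is correct and follows essentially the same route as the paper: part (i) is a direct application of Theorem~\ref{weight-leb-bdd-comp}(ii), and part (ii) is the Kato--Birman--Rosenblum trace-class machinery applied to the chain $H_1,\ \gamma H_0,\ H_2$. The only cosmetic difference is that you unpack Proposition~\ref{set-of-prop}(ii) into the chain rule plus the completeness equivalence of Proposition~\ref{wave-chain-complete}, whereas the paper cites Proposition~\ref{set-of-prop}(ii) wholesale (and, as its own remark notes, that proposition is exactly this combination), so the factorization and the identification $\sigma_{ac}(H_j)=[0,\gamma\pi]$ come out the same way.
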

\begin{proof}
For $j =1,2,$ since 
\beqn
	\Big|\eta_j\Big(\frac{y}{x}\Big) - \eta_j(0_+)\Big| \Le D(p) \Big(\frac{y}{x}\Big)^pg(y),\quad y > 0,~x > K >0
\eeqn
with $\int_0^\infty y^pg(y)e^{-y}dy < \infty,$ part (i) follows from Theorem~\ref{weight-leb-bdd-comp}(ii). The first part of (ii) follows from Proposition~\ref{set-of-prop}(i), and since $\eta_1(0_+) = \eta_2(0_+),$ the second part follows from Proposition~\ref{set-of-prop}(ii). Finally, it also follows from Proposition~\ref{set-of-prop}(ii) that $H_{j,ac}~ (j=1,2)$ are unitarily equivalent to $\gamma H_0$ implying that 
\beqn
\sigma_{ac}(H_j) = \sigma(\gamma H_0) =  \gamma \sigma_{ac}(H_0)= [0, \gamma \pi], \quad j =1,2,
\eeqn
as by \cite[Theorem~1.1]{PP-1}, $H_0$ is spectrally absolutely continuous.
\end{proof}

\begin{remark}
Since any densities $\eta_j~(j=1,2)$ given in  Example~\ref{exam-weight-leb}(iii)-(v) satisfy $\eta_j(0_+)=1,$ as a consequence of Theorem~\ref{scattering-weight-leb}, we get that $H_{\mu_j, ac}$ is unitarily equivalent to $H_{\lambda}.$ However, in Example~\ref{exam-weight-leb}(i)\&(ii), if we consider $\eta(t) = 1+t^p$ or $1+\ln(1+t),$ then the corresponding $H_{\mu, ac}$ will be unitarily equivalent to $H_{\lambda}.$
\end{remark}

Even when we do not have $\mu_1-\mu_2 \Ge 0$ or $\mu_2-\mu_1 \Ge 0,$ we see that ${H}_{\lambda}$ plays a central role in identifying the absolutely continuous spectra for certain choices of $\mu_1$ and $\mu_2.$
\begin{example}
	For $j =1,2,$ let $\mu_j(dt) = (1+t^{p_j})dt$ be such that $p_j>0.$ Note that neither $\mu_1-\mu_2 \Ge 0$ nor $\mu_2-\mu_1 \Ge 0.$ 
	Since $\mu_j(dt) - \lambda = t^{p_j}dt,$ by Example~\ref{exam-weight-leb}(i),  ${H}_{\mu_j} - {H}_{\lambda}$ is of trace-class.
	By Proposition~\ref{set-of-prop}(i), 
	\beqn
	\Omega_{\pm}(H_{\mu_j}, H_\lambda) = \text{s-}\lim_{t \rar {\pm \infty}} e^{iH_{\mu_j}t} e^{-iH_\lambda t}, \quad j = 1,2
	\eeqn
	exist and are complete. Therefore, $H_{\mu_j, ac}$ is unitarily equivalent to $H_{\lambda}.$ Furthermore, by Proposition~\ref{wave-chain-complete}(ii), we get that  \beqn
	\Omega_{\pm}(H_{\mu_1}, H_{\mu_2}) = \Omega_{\pm}(H_{\mu_1}, H_\lambda) \Omega_{\pm}(H_{\mu_2}, H_\lambda)^*.
	\eeqn
	This happens though  the measures $\mu_1(dt ) = (1+ t^p_1) dt$ and $\mu_2(dt) = (1+ t^p_2) dt,$ which have no domination between each other. In fact, even if $p_1 < p_2 ,~ t^p_1$    will compare differently  with $t^p_2$  depending on if $t<1$ or $t > 1.$
	\hfill $\diamondsuit$
\end{example}

\begin{example}
	Let $\mu_1$ and $\mu_2$ be two positive measures on $(0, \infty)$ such that $\mu_1 - \mu_2$ is a positive measure. Suppose that there exists $\epsilon > 0$ such that $\mu_1|_{[0, \epsilon)} = \mu_2|_{[0, \epsilon)}.$ Then, for any integer $m \Ge 2,$ 
	\beqn
	\widehat{\mu_1 - \mu_2}(2 \ln m) = \int_\epsilon^\infty m^{-2t} d(\mu_1 - \mu_2)(t)
	 &\Le& m^{-\epsilon} \int_\epsilon^\infty m^{-t}d(\mu_1 - \mu_2)(t)\\
	&\Le& m^{-\epsilon} \int_\epsilon^\infty 2^{-t}d(\mu_1 - \mu_2)(t),
	\eeqn
	which implies that \eqref{trace-cond} is satisfied, and hence by Proposition~A(i) in the Appendix, ${H}_{\mu_1} - {H}_{\mu_2}$ is a trace-class operator. Therefore, by Proposition~\ref{set-of-prop}(i), $\Omega_{\pm}(H_{\mu_1}, H_{\mu_2})$ exist, are complete and  $\sigma_{ac}(H_{\mu_1}) = \sigma_{ac}(H_{\mu_2}).$
    As a special case, if $\mu(dt) = \lambda|_{[0, 1)} + e^{-at}dt|_{[1, \infty)},$ then $\lambda - \mu$ is positive and
$\lambda|_{[0,1)} = \mu|_{[0,1)},$
which implies that $H_{\lambda} - H_\mu$ is of trace class. Therefore, $\sigma_{ac}({H}_{\mu}) = [0, \pi]$ as  $\sigma_{ac}({H}_{\lambda}) = [0, \pi].$    
		\hfill $\diamondsuit$
\end{example}

\section*{Appendix: Some known facts about Helson matrices}

In this appendix, we provide proofs of several established properties of Helson matrices that are difficult to locate in the existing literature.\\


\noindent \textbf{Proposition~A.} Let $H_{\alpha}:=(\alpha(mn))_{m,n=2}^\infty$ be a bounded operator on $\ell^2(\mathbb N_2).$ 
	Then, the following statements are valid$:$
	\begin{enumerate}
		\item[$\mathrm{(i)}$] if $H_{\alpha}$ is positive on $\ell^2(\mathbb N_2),$ then $H_{\alpha}$ is of trace-class if and only if 
		\beq \label{trace-cond}
		\sum_{m=2}^\infty \alpha(m^2)  < \infty,
		\eeq
		\item[$\mathrm{(ii)}$] 
		$H_{\alpha}$ is of Hilbert-Schmidt class if and only if
		\beq \label{Hilb-schm-cond}
		\sum_{m=4}^\infty |\alpha(m)|^2(d(m) -2) < \infty,
		\eeq
		where $d(\cdot)$ denotes the standard divisor function,
		\item[$\mathrm{(iii)}$] if $H_{\alpha}$ belongs to the Schatten $p$-class for some $2 < p < \infty,$ then 
		\beq \label{Schatten-p-condition}
		\sum_{n=2}^{\infty} \left(\sum_{m=2}^{\infty}|\alpha(mn)|^2\right)^{\frac{p}{2}} < \infty,
		\eeq
		\item[$\mathrm{(iv)}$] if $H_{\alpha}$ belongs to the Schatten $p$-class for some $p \in (1, \infty) \backslash \{2\},$ then 
		\beq \label{Schatten-p-condition-p}
		\sum_{m=2}^{\infty} |\alpha(m^2)|^{p} < \infty,
		\eeq
		\item[$\mathrm{(v)}$] for $p \in (0,2)\backslash \{1\},$ if \eqref{Schatten-p-condition} holds or 
		\beq \label{Schatten-p-condition-sa}
		\sum_{m,n=2}^\infty |\alpha(mn)|^p < \infty,
		\eeq
		then $H_{\alpha}$ belongs to the Schatten $p$-class.
	\end{enumerate}
	
\begin{proof} 
(i) 
Since $H_\alpha$ is positive and 
\beq
\label{H-ns-action}
\inp{H_\alpha e_m}{e_n} = \alpha(mn), \quad m, n \Ge 2,
\eeq
part (i) follows from  \cite[Theorem~4.1]{HKZ}.
	
	(ii) For any integer $m \Ge 2,$ 
	\beq 
	\|H_\alpha e_m\|_2^2 = \sum_{n=2}^\infty |\alpha(mn)|^2.
	\label{H-ns-action-square}
	\eeq
Since the series $\sum_{m,n=2}^\infty |\alpha(mn)|^2$ is indexed by $m,n \in \mathbb N_2$ and  $d(k)$ counts divisors including $1$ and $k$ for any integer $k \Ge 1$, it follows that $$\sum_{m,n=2}^\infty |\alpha(mn)|^2 = \sum_{m=4}^\infty |\alpha(m)|^2r(m),$$
	where $r(m) = d(m) - 2$ for all $m \Ge 4$. Thus, by \cite[Theorem~4.2]{HKZ}, $H_\alpha$ is Hilbert-Schmidt if and only if \eqref{Hilb-schm-cond} holds.
	
	(iii) Since any operator in the Schatten $p$-class is compact, by \eqref{H-ns-action-square} and \cite[Theorem~A]{HKZ}, we get that $\{\|H_\alpha e_m\|\}_{m=2}^\infty \in \ell^p(\mathbb N_2),$ proving \eqref{Schatten-p-condition}.
	
	(iv) By \eqref{H-ns-action},
	\beqn
	\inp{H_\alpha e_m}{e_m} = \alpha(m^2), \quad m\Ge 2,
	\eeqn
	which together with the assumption and \cite[Theorem~2.7]{HKZ} yields \eqref{Schatten-p-condition-p}.
	
	(v) Note that the first assumption implies that $\{\|H_\alpha e_m\|\}_{m=2}^\infty \in \ell^p(\mathbb N_2),$ which together with \cite[Theorem~B]{HKZ} gives the Schatten $p$-class of $H_\alpha.$  
	For the remaining part, note that \eqref{Schatten-p-condition-sa} together with \eqref{H-ns-action} gives that 
	\beqn
	\sum_{m,n=2}^\infty \inp{H_\alpha e_m}{e_n}^p < \infty.
	\eeqn
	Therefore, since $H_\alpha$ is self-adjoint, by \cite[Theorem~3.3]{HKZ}, $H_\alpha$ is in the Schatten $p$-class.
\end{proof}

%

In order to understand $H_\mu$ for positive measure $\mu$, we in the spirit of the proof of \cite[Theorem~2.2]{PP-1}, introduce the linear operator $\mathcal N_\mu$  defined as:
\beqn
\mathcal D(\mathcal N_\mu) := \Big\{x=\{x_m\}_{m \Ge 2} \in \ell^2(\mathbb N_2) : t \mapsto \sum_{m=2}^\infty x_m m^{-1/2-t} \in L^2((0, \infty), \mu) \Big\}
\eeqn 
and for $x=\{x_m\}_{m \Ge 2} \in \mathcal D(\mathcal N_\mu),$ 
\beq \label{N-mu-mod}
(\mathcal N_\mu x)(t) = \sum_{m=2}^\infty x_m m^{-1/2-t}, \quad t > 0.
\eeq
 
\noindent \textbf{Lemma~B.}
Let $\mu$ be a positive Borel measure on $(0,\infty)$. The
$\mathcal N_\mu$, defined by \eqref{N-mu-mod}, is  a closed densely defined operator on $\ell^2(\mathbb N_2)$. The Helson  matrix $H_\mu$ defined in Definition~\ref{H-mu} is therefore a densely defined positive operator in $\ell^2(\mathbb N_2)$.

\begin{proof}
	Note that $\mathcal D(\mathcal N_\mu)$ contains $\text{span}\{e_n : n \Ge 2\},$ which implies that $\mathcal N_\mu$ is a densely defined linear operator. 
	We next show that $\mathcal N_\mu$ is closed. To see this, let $\{f_n\}_{n = 1}^\infty \subseteq \mathcal D(\mathcal N_\mu)$ be a sequence such that $f_n \rightarrow f$ in $\ell^2(\mathbb N_2)$ and $\mathcal N_\mu f_n \rightarrow g$ in $L^2((0, \infty), \mu)$ as $n \rar \infty.$ Then, by the Cauchy-Schwarz inequality,
	\beqn
	\Big|\mathcal N_\mu f_n(t) - \sum_{m=2}^\infty f(m) m^{-1/2-t}\Big| \Le \|f_n - f\|_2 \sqrt{\zeta(1 + 2t)}, \quad t > 0,
	\eeqn
	where $\zeta$ denotes the Riemann zeta function. 
	This yields that $\mathcal N_\mu f_n \rar \sum_{m=2}^\infty f(m) m^{-1/2-t}$ as $n \rar \infty$ pointwise on $(0, \infty).$ Since a subsequence of $\{\mathcal N_\mu f_n\}_{n = 1}^\infty$ converges to $g$ pointwise $\mu$-a.e., we get
	\beqn
	g(t) =  \sum_{m=2}^\infty f(m) m^{-1/2-t}, ~\mu\text{-a.e.},
	\eeqn
	and hence, $f \in \mathcal D(\mathcal N_\mu)$ and $\mathcal N_\mu f =g.$ Thus, $\mathcal N_\mu$ is a densely defined closed linear operator with domain $\mathcal D(\mathcal N_\mu).$ 
	By von Neumann's theorem (see \cite[Theorem~V.3.24]{Ka}), \beq\label{N-mu}
	\text{$\mathcal N_\mu^*\mathcal N_\mu$ is a densely defined positive and self-adjoint operator.}
	\eeq
	We next show that $H_\mu = \mathcal N_\mu^*\mathcal N_\mu$ on $\mathcal D(\mathcal N_\mu^*\mathcal N_\mu)$ as the above matricial expression. Indeed, for every $x \in \mathcal D(\mathcal N_\mu^*\mathcal N_\mu) \subseteq \mathcal D(\mathcal N_\mu),$
	by Fubini's theorem,
	\beqn
	\inp{H_\mu(x)}{e_n} = 
	\sum_{m=2}^\infty x_m H_\mu(mn)
	&=& \int_0^\infty \mathcal N_\mu(x)(t) n^{-1/2-t} \mu(dt)\\
	&=&  
	\inp{\mathcal N_\mu(x)}{\mathcal N_\mu(e_n)}\\
	&=& \inp{\mathcal N_\mu^*\mathcal N_\mu(x)}{e_n}, \quad n \Ge 2,
	\eeqn
	which, by \eqref{N-mu}, completes the proof. 
\end{proof}

The following facts have been recorded in the discussion on \cite[p.~164]{PP-1}. Since we could not locate their proofs in the literature, we include them here for the sake of completeness.\\

\noindent \textbf{Proposition~C.} Let $\mu$ be a regular positive Borel measure $\mu$ on $(0, \infty)$ such that
$\int_{0}^\infty 2^{-t} \mu(dt) < \infty.$ Then, 
\begin{itemize}
	\item[$\mathrm{(i)}$] if $(\ln n) \widehat{\mu}(\ln n) \rar \infty$ as $n \rar \infty,$ then $H_\mu$ is a positive unbounded self-adjoint operator,
	\item[$\mathrm{(ii)}$]  if $(\ln n) \widehat{\mu}(\ln n) \rar 0$ as $n \rar \infty,$ then $H_\mu$ is compact.
\end{itemize}

\begin{proof}
(i): We know by Lemma~B that $H_\mu$ is a positive densely defined self-adjoint operator, so we need to prove only unboundedness. By the hypothesis, for each $C > 0,$ there exists a positive integer $N = N(C)$ such that
\beq \label{evetually-lower}
\widehat{\mu}(\ln (mn)) > C (\ln(mn))^{-1}~\text{whenever}~ mn \Ge N.
\eeq
Now set for such $N,$ a vector $x^{(N)} \in \ell^2(\mathbb N_2)$ by 
$
x_n^{(N)} = (n \ln n)^{-\frac{1}{2}},~ N \Le n \Le N^4.
$
Clearly, $\|x^{(N)}\|_2^2$ diverges to $\infty$ as $N \rar \infty.$ Next, for every integer $m \Ge 2,$ 
\beqn
(H_\mu x^{(N)})_m = \sum_{n=N}^{N^4} \frac{\widehat{\mu}(\ln (mn)) x_n^{(N)}}{\sqrt{mn}} \overset{\eqref{evetually-lower}}> 
\frac{C}{\sqrt{m}} \sum_{n=N}^{N^4} \frac{1}{n \sqrt{\ln n} \ln(mn)}.
\eeqn
Thus, by using \eqref{int-test-est} and letting $t = \sqrt{\ln x},$ 
\beqn
\inp{x^{(N)}}{H_\mu x^{(N)}} &>& C\sum_{m=N}^{N^4} \frac{1}{m \sqrt{\ln m}} \sum_{n=N}^{N^4} \frac{1}{n \sqrt{\ln n} \ln (mn)}\\ 
&\Ge& 
C\sum_{m=N}^{N^4} \frac{1}{m \sqrt{\ln m}} \int_{N}^{N^4} \frac{1}{x \sqrt{\ln x} \ln (mx)}dx\\
&=&  2C\sum_{m=N}^{N^4} \frac{1}{m \sqrt{\ln m}} \int_{\sqrt{\ln N}}^{2\sqrt{\ln N}}\frac{1}{t^2+\ln m}dy\\ &=&  2C\sum_{m=N}^{N^4} \frac{1}{m\ln m} \Big(\tan^{-1}\big(2\sqrt{\frac{\ln N}{\ln m}}\big) - \tan^{-1}\big(\sqrt{\frac{\ln N}{\ln m}}\big) \Big)\\
&=& 2C\sum_{m=N}^{N^4} \frac{1}{m\ln m} \tan^{-1}\Big(\frac{\sqrt{\frac{\ln N}{\ln m}}}{1+2\frac{\ln N}{\ln m}}\Big).
\eeqn
Recall that $\tan^{-1}(\cdot)$ is an increasing function. Next, since $\ln N \Le \ln m \Le 4\ln N,$
$
\sqrt{\frac{\ln N}{\ln m}} \Ge 2^{-1} ~~\text{and}~~ \frac{\ln N}{\ln m} \Le 1,
$
and hence 
\beqn
\inp{x^{(N)}}{H_\mu x^{(N)}}  \Ge 
2C \tan^{-1}\Big(\frac{1}{6}\Big) \sum_{m=N}^{N^4} \frac{1}{m\ln m} = 2C \tan^{-1}\Big(\frac{1}{6}\Big)\|x^{(N)}\|_2^2.
\eeqn
Since $C>0$ is arbitrary, $H_\mu$ is not bounded.

(ii): Define an infinite matrix for every $N \in \mathbb N_2,~ H_\mu^{(N)}$ in $\ell^2(\mathbb N_2)$ as
\beqn
H_{\mu,m,n}^{(N)} = 
\begin{cases}
	H_{\mu,m,n}, & 2 \Le m,n\Le N,\\
	0,& \text{otherwise} 
\end{cases}
\eeqn
so that $H_\mu^{(N)}$ is $N$-truncation of $H_\mu.$ Then, 
\beqn
(H_\mu - H_\mu^{(N)})_{m,n} = 
\begin{cases}
	0, & 2 \Le m,n\Le N,\\
	H_{\mu,m,n}, & \text{if $m$ or $n \Ge N+1$}. 
\end{cases}
\eeqn
Since by hypothesis $(\ln n)\widehat{\mu}(\ln n) \rar 0$ as $n \rar \infty,$ it follows that $\widehat{\mu}(\ln n) \Le D (\ln n)^{-1}$ for some $D > 0$ and all integers $n \Ge 2,$ and hence, $H_\mu$ (and of course $H_{\mu}^{(N)}$ as well) are bounded, by Theorem~\ref{l^2_suff_con}(i).

As in part (i) of Theorem~\ref{l^2_suff_con}, we apply Proposition~\ref{estimate_norm} on $H_\mu - H_\mu^{(N)},$ using the same auxiliary function $t_m = \frac{1}{\sqrt{m \ln m}}~(m \Ge 2)$ and note that the hypothesis implies that for arbitrary $\epsilon > 0,$ there exists $N_0 \in \mathbb N$ such that $\ln(mn)\widehat{\mu}(\ln mn) < \epsilon$ whenever $mn > N_0.$ Thus, 
\beqn
\sup_{n \Ge 2} \frac{1}{t_n}\sum_{\substack{m=2 \\ mn > N_0}}^\infty \frac{\widehat{\mu}(\ln(mn))}{\sqrt{mn}}t_m
&=& \sup_{n \Ge 2} \sqrt{\ln n}\sum_{\substack{m=2 \\ mn > N_0}}^\infty \frac{\widehat{\mu}(\ln(mn))}{m\sqrt{\ln m}}\\ 
&\Le& \epsilon \sup_{n \Ge 2} \sqrt{\ln n}\sum_{m=2}^\infty \frac{1}{m\sqrt{\ln m} \ln (mn)},
\eeqn
since $(H_\mu - H_\mu^{(N_0)})_{m,n} = 0$ whenever $mn \Le N_0.$ Therefore, by Proposition~\ref{estimate_norm} and \eqref{int-test-est},
\beqn
\|H_\mu - H_\mu^{(N)}\| \Le \|H_\mu -H_\mu^{(N_0)}\| \Le \pi \epsilon
\eeqn
for all $N> N_0,$ which gives that $\|H_\mu - H_\mu^{(N)}\| \rar 0$ as $n \rar \infty.$ On the other hand, it is clear that $H_\mu^{(N)}$ is Hilbert-Schmidt for every $N \in \mathbb N_2$ and therefore $H_\mu$ is compact.
\end{proof}

\noindent 
\textit{Acknowledgment:} The first author (SC) had the privilege of engaging in numerous discussions with the late Professor Franciszek Hugon Szafraniec, starting from their first meeting at IWOTA in 2008. During multiple visits to the Jagiellonian University, it was a pleasure to interact with him and share subtly humorous conversations.
The second author (CKS) thanks both the Indian Statistical Institute, Bangalore 
and the Indian Institute of Technology, Bombay for having facilitated 
his visits to interact with the third author (KBS) to carry out this project. The third author acknowledges support from the Senior Scientist Scheme of the Indian National Science Academy. The authors thank the anonymous referee for several suggestions that improved both the content and the presentation.
\vskip.2cm
\noindent 
\textit{Statement and Declarations:}

\vskip.3cm

\noindent
{\bf Conflict of interest} The authors declare that they have no conflict of interest.

\vskip.3cm

\noindent 
{\bf Data Availability} No data was used for the research described in the article.

{}
\end{document}